\newtheorem{thm}{Theorem}[section]
\newtheorem{lem}[thm]{Lemma}
\theoremstyle{definition}
\theoremstyle{remark}
\newtheorem{rem}[thm]{Remark}
\newtheorem{exe*}[thm]{Exercise*}
\newtheorem{exe!}[thm]{Exercise(!)}
\numberwithin{equation}{section}
\newcommand{\set}[1]{\left\{#1\right\}}
\newcommand{\Real}{\mathbb R}
\newcommand{\Natural}{\mathbb N}
\newcommand{\such}{\ | \ }
\newcommand{\ud}{\mathrm d}
\newcommand{\dfn}{:=}
\newcommand{\F}{\mathcal{F}}
\newcommand{\G}{\mathcal{G}}
\newcommand{\absco}{{<\kern-0.53em<}}
\newcommand{\e}{\mathrm{e}}
\newcommand{\pare}[1]{\left(#1\right)}
\newcommand{\bra}[1]{\left[#1\right]}
\newcommand{\indic}{\textbf{1}}
\newcommand{\dbracc}[1]{[\kern-0.15em[ #1 ]\kern-0.15em]}
\newcommand{\dbraoc}[1]{]\kern-0.15em] #1 ]\kern-0.15em]}
\newcommand{\dbraco}[1]{[\kern-0.15em[ #1 [\kern-0.15em[}
\newcommand{\dbraoo}[1]{]\kern-0.15em] #1 [\kern-0.15em[}
\newcommand{\filt}[1]{\mathcal{#1}_\cdot}
\renewcommand{\P}{\textsf{\upshape P}}
\newcommand{\Qu}{\textsf{\upshape Q}}
\newcommand{\E}{\textsf{\upshape E}}
\newcommand{\cF}{\mathcal{F}}
\newcommand{\cG}{\mathcal{G}}
\newcommand{\loct}{\Lambda}
\date{\today}
\title{Projections of Scaled Bessel Processes}
\author{Constantinos Kardaras \and Johannes Ruf}%
\email{k.kardaras@lse.ac.uk}%
\email{j.ruf@lse.ac.uk}%
\subjclass[2000]{60G44; 60G48; 60H10; 60J55; 60J60}
\keywords{Bessel process; Filtering; Local martingale; Local time}%
\date{\today}%
\thanks{The authors would like to thank anonymous referees for their valuable comments. A shortened version of this article been submitted for journal publication.}
\begin{document}

\begin{abstract}
Let $X$ and $Y$ denote two independent squared Bessel processes of dimension $m$ and $n-m$, respectively, with $n\geq 2$ and $m \in [0, n)$, making $X+Y$ a squared Bessel process of dimension $n$. For appropriately chosen function $s$, the process $s (X+Y)$ is a  local martingale. We study the representation and the dynamics of $s(X+Y)$, projected on the filtration generated by $X$.  This projection is a strict supermartingale if, and only if, $m<2$. The finite-variation term in its Doob-Meyer decomposition only charges the support of the Markov local time of $X$ at zero.
\end{abstract}

\maketitle

\section*{Introduction} 

Optional projections of martingales are martingales; however, optional projections of local martingales are not necessarily local martingales. If the local martingale is nonnegative, \textsc{Fatou}'s lemma only yields that these optional projections are supermartingales.

Due to their analytic tractability, scaled \textsc{Bessel} processes of dimension two or higher are ideal to study this phenomenon. A first important step has been taken by \cite{Foellmer_Protter_2010} and \cite{Larsson_2013}, who consider the three-dimensional \textsc{Bessel} process, namely the modulus of a three-dimensional Brownian motion started away from zero, in the filtration generated by its components. The  reciprocal of the three-dimensional \textsc{Bessel} process is a local martingale; in \cite{Foellmer_Protter_2010} and \cite{Larsson_2013}, it is observed that its optional projection becomes a strict supermartingale when projecting  on the first component of the three-dimensional Brownian motion. However, when projecting on the first two components, the optional projection preserves the local martingale property.
 
In this article, we investigate these surprising observations further by providing a systematic study of optional projections of  scaled \textsc{Bessel} processes of any dimension greater than or equal to two. We provide two proofs to demonstrate our main result: one mostly analytic, while the other more probabilistic.
 
We need to point out the deep work of \cite{Carmona:Petit:Yor} on intertwining two related \textsc{Markov} processes. In particular, two squared \textsc{Bessel} processes of different dimensions are intertwined by an appropriate use of the expectation operator.  As already mentioned above, in this article, we complement their insights by focusing on the distinction between strict and non-strict supermartingales.

\section{Main Result} \label{S:main}
Consider a probability space $(\Omega, \cG, \P)$, equipped with two independent Brownian motions $B^X$ and $B^Y$.  Fix $n \geq 2$ and  $m \in [0, n)$ and consider the two stochastic differential equations
\begin{align*}
	X_t = 1 + m t + 2 \int_0^t \sqrt{X_u} \ud B^X_u&, \qquad t \geq 0;\\
	Y_t = (n-m) t + 2 \int_0^t \sqrt{Y_u} \ud B^Y_u&, \qquad t \geq 0.
\end{align*}
These stochastic differential equations have unique strong solutions, called squared \textsc{Bessel} process of dimension $m$ and $n-m$, respectively; see \cite[Section~XI.1]{RY}. 
 \textsc{L\'evy}'s characterisation of Brownian motion yields that $X+Y$ is also a squared \textsc{Bessel} process, now of dimension $n$.  \textsc{Feller}'s test for explosions yields that $X+Y$ is strictly positive since $n \geq 2$. We shall use $\filt{G}$ throughout to denote the natural filtration generated by the pair $(X, Y)$.

Next, consider the  function
\begin{align*}
s: (0, \infty) \ni w \mapsto 
\begin{cases}
	w^{1-n/2}, \qquad &\text{if $n > 2$};\\
	- \log(w), \qquad &\text{if $n = 2$}.
\end{cases}
\end{align*}
\textsc{It\^o}'s formula yields that $s (X + Y)$ is a  local martingale. Let $\filt{F}$ now denote the smallest right-continuous filtration that makes $X$ adapted. For future reference, note that the process $\int_0^\cdot \sqrt{X_u} \ud B^X_u$ is adapted to the filtration $\filt{F}$. We are interested in the $\filt{F}$--optional projection $Z$ of $s(X+Y)$, which is the unique  $\filt{F}$--optional process $Z$ such that 
\begin{align*}
Z_\tau = \E\left[s(X_\tau + Y_\tau) | \mathcal F_\tau \right]
\end{align*}
holds for all bounded $\filt{F}$ stopping times $\tau$.

\begin{rem} \label{F:1}
In order to ensure that $Z$ above exists, it suffices that $\E\left[|s(X_\tau + Y_\tau) | \right] < \infty$ holds for a fixed bounded $\filt{F}$ stopping time $\tau$. When $n > 2$, $\E\left[|s(X_\tau + Y_\tau) | \right] < \infty$ holds from the optional sampling theorem because $s(X+Y)$ is a nonnegative local martingale, thus a supermartingale, under $\filt{G}$. For $n=2$, we claim that $\E[|\log(J_\tau)|] < \infty$ for all bounded stopping times $\tau$ when $J$ is two-dimensional squared \textsc{Bessel} process with $J_0 = 1$. Indeed, first note that $\E[J_\tau] \leq 1 + 2 \E[\tau] < \infty$ holds from the dynamics of $J$, localisation, \textsc{Fatou}'s lemma and monotone convergence. Therefore, $\E[\log_+(J_\tau)] \leq \E[J_\tau] < \infty$ holds. Furthermore, since $\log J$ is a local martingale and $J_0 = 1$, we have $\E[\log_-(J_{\tau \wedge \tau_m})] = \E[\log_+(J_{\tau \wedge \tau_m})] \leq 1 + 2 \E[\tau \wedge \tau_m]$ along a localising sequence $(\tau_m)_{m \in \mathbb{N}}$, giving $\E[\log_-(J_{\tau})] \leq 1 + 2 \E[\tau] < \infty$ by \textsc{Fatou}'s lemma and monotone convergence.
\end{rem}

In order to set the stage for the statement of our main result, recall the Gamma function
\begin{align*}
(0, \infty)	\ni k \mapsto \Gamma(k) \dfn \int_0^\infty w^{k-1} \e^{-w} \ud w.
\end{align*}
Furthermore, define the stopping time
\begin{align} \label{eq:180225.7}
\rho \dfn \inf \left\{t \geq 0 \such X_t = 0\right\},
\end{align}
which is $\P$--almost surely finite when $0 \leq m < 2$.

In the case $0 < m < 2$, note that $X$ allows for \textsc{Markov} local time process $\loct$ at zero, defined via
\begin{equation} \label{eq:loct}
\loct_t \dfn \lim_{\varepsilon \downarrow 0} m \varepsilon^{-m/2} \int_0^t 
\indic_{\{X_u < \varepsilon  \}} \ud u, \qquad t \geq 0. 
\end{equation}
References for existence and properties of $\loct$ are provided in  Section~\ref{A:1} below; in particular, it will also be shown there that $\Lambda$ coincides with the semimartingale local time at zero of the scaled process $X^{1 - m/2} / (2 - m)$.

With the above notation, we now present the main result of this note.
\begin{thm} \label{T:1}
The $\filt{F}$--optional projection $Z$ of $s(X + Y )$ exists and satisfies $Z_t = f(t, X_t)$ for all $t > 0$, where
\begin{align}  \label{eq:180225.1}
f(t,x) \dfn \frac{1}{\Gamma\left( (n-m) / 2\right)} \times
\begin{cases}
\int_0^\infty (x + 2tw)^{1 - n/2} w^{(n-m)/2 - 1} \e^{-w} \ud w, \quad &\text{if $n > 2$}; \\
\int_0^\infty - \log(x + 2tw)w^{-m/2} \e^{-w} \ud w, \quad &\text{if $n = 2$}
\end{cases}
\end{align}
for all $t > 0$ and $x \geq 0$. Furthermore, the following statements hold:
	\begin{itemize}
		\item If $m \geq 2$ (thus, $n>2$), then
	\begin{align}  \label{eq:180228.5}
		Z = 1 + 2 \int_0^\cdot f'_x(u, X_u) \sqrt{X_u} \ud B^X_u;
	\end{align}
	hence $Z$ is a strict local martingale.
	\item If $m \in (0,2)$, then $Z$ is a strict supermartingale, that is, not a local martingale. With $\loct$ given by \eqref{eq:loct}, the \textsc{Doob-Meyer} decomposition of $Z$ is 	
	\begin{align} \label{eq:180228.4}  
Z = 
\begin{cases}
1 + 2 \int_0^\cdot f'_x(u, X_u) \sqrt{X_u} \ud B^X_u - \frac{\Gamma\left( m / 2\right)  }{\Gamma\left( n / 2 - 1 \right) } \int_0^\cdot \left( 1 / 2u\right)^{(n-m)/2} \ud \loct_u,\quad &\text{if $n > 2$}; \\
2 \int_0^\cdot f'_x (u, X_u) \sqrt{X_u} \ud B^X_u - \Gamma\left( m / 2 \right) \int_0^\cdot \left( 1 / 2u \right)^{1-m/2} \ud \loct_u,\quad &\text{if $n = 2$}.
\end{cases}
\end{align} 
	\item 
	If $m = 0$, then $Z$ is again a strict supermartingale of the form
\begin{align} \label{eq:180228.8}  
Z = 
\begin{cases}
1 + 2 \int_0^{\rho \wedge \cdot} f'_x(u, X_u) \sqrt{X_u} \ud B^X_u - \frac{2}{ \Gamma\left( n / 2 - 1 \right) } \int_0^\cdot \left( 1 / 2u\right)^{n/2} \indic_{\{\rho \leq u \}} \ud u,\quad &\text{if $n > 2$}; \\
2 \int_0^{\rho \wedge \cdot}f'_x  (u, X_u) \sqrt{X_u} \ud B^X_u -  \int_0^\cdot \left( 1 / u \right)  \indic_{\{\rho \leq u \}} \ud u,\quad &\text{if $n = 2$}.
\end{cases}
\end{align} 
\end{itemize}
\end{thm}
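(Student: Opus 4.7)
My strategy unfolds in four stages: compute $Z$ explicitly by conditioning, verify a generator PDE, and then treat each of the three regimes in turn, with the case $m \in (0,2)$ being the main technical challenge. First, the independence of $B^X$ and $B^Y$ makes $Y$ independent of $\filt F$, so $Z_t = \E[s(X_t + Y_t) \mid X_t]$. Since $Y$ is a squared \textsc{Bessel} process of dimension $n - m$ started at $0$, the density of $Y_t$ is explicit (indeed $Y_t/(2t)$ has a $\mathrm{Gamma}((n-m)/2, 1)$ distribution), and the substitution $w = y/(2t)$ in the integral of $s(X_t + y)$ against this density produces formula~\eqref{eq:180225.1}; the integrability discussed in Remark~\ref{F:1} confirms that $f(t, X_t)$ is the $\filt F$-optional projection. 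Differentiating under the integral sign then verifies the parabolic PDE
\begin{align*}
\partial_t f + m \partial_x f + 2 x \partial_{xx} f = 0 \qquad \text{on } (0, \infty) \times (0, \infty),
\end{align*}
which is the statement that the generator of the squared \textsc{Bessel} process of dimension $m$ annihilates $f(t, \cdot)$.

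When $m \geq 2$, the process $X$ stays strictly positive for $t > 0$ almost surely, so \textsc{It\^o}'s formula applied directly to $f(t, X_t)$, together with the PDE, immediately produces \eqref{eq:180228.5}. Strictness of the local martingale follows from $\E[Z_t] = \E[s(X_t + Y_t)] < s(1) = 1 = Z_0$, because $s$ evaluated at a squared \textsc{Bessel} process of dimension $n \geq 3$ started at $1$ is a strict local martingale, hence a strict supermartingale.

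The hard case is $m \in (0, 2)$, where $X$ hits $0$ and $\partial_x f(t, 0^+) = -\infty$, so \textsc{It\^o}'s formula fails directly. I would regularize by applying \textsc{It\^o} to the smooth process $f(t, X_t + \eps)$ and using the PDE evaluated at $(t, X_t + \eps)$, which yields
\begin{align*}
f(t, X_t + \eps) = f(0, 1 + \eps) + 2 \int_0^t f_x(u, X_u + \eps) \sqrt{X_u} \, \ud B^X_u - 2 \eps \int_0^t f_{xx}(u, X_u + \eps) \, \ud u.
\end{align*}
As $\eps \downarrow 0$ the left side and the stochastic integral converge to the desired expressions, while the drift requires singular analysis. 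The substitution $w = x r / (2u)$ in the integral defining $f_{xx}$, combined with a Beta-function identity and $\Gamma(n/2+1) = (n/2)(n/2-1)\Gamma(n/2-1)$, gives the asymptotic
\begin{align*}
f_{xx}(u, x) \sim \frac{\Gamma(m/2+1)}{\Gamma(n/2-1)} \, x^{-m/2 - 1} \, (2u)^{-(n-m)/2} \qquad (x \downarrow 0)
\end{align*}
for $n > 2$, and a similar identity for $n = 2$. Next, using the coincidence of $\loct$ with the semimartingale local time at $0$ of $V := X^{1 - m/2}/(2 - m)$ (established in Section~\ref{A:1}), an application of the occupation-times formula for $V$ followed by the substitution $z = \eps r$ produces the key limit
\begin{align*}
\lim_{\eps \downarrow 0} 2 \eps \int_0^t (X_u + \eps)^{-m/2 - 1} g(u) \, \ud u = \frac{2}{m} \int_0^t g(u) \, \ud \loct_u
\end{align*}
for continuous $g$; combined with the asymptotic for $f_{xx}$ and $\Gamma(m/2+1) = (m/2)\Gamma(m/2)$, this identifies the drift coefficient $\Gamma(m/2)/\Gamma(n/2 - 1)$ in~\eqref{eq:180228.4}. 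Nontriviality of this drift delivers the strict-supermartingale claim, and the $n = 2$ computation follows the same chain.

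Finally, when $m = 0$, the process $X$ is a nonnegative local martingale absorbed at $\rho$. On $[0, \rho)$, repeating the \textsc{It\^o} argument from the $m \geq 2$ case (now with drift coefficient $m = 0$) produces the stochastic-integral piece of~\eqref{eq:180228.8}. On $[\rho, \infty)$, $X \equiv 0$ and $Z_t = f(t, 0)$ is smooth in $t$; direct differentiation of~\eqref{eq:180225.1} at $x = 0$ gives $(\ud / \ud t) f(t, 0) = -(2/\Gamma(n/2-1))(1/(2t))^{n/2}$ for $n > 2$ (respectively $-1/t$ for $n = 2$), producing the drift in~\eqref{eq:180228.8}.
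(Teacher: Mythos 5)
Your overall architecture matches the paper for Steps~1--2 (identifying $Z = f(\cdot, X)$ via the Gamma density of $Y_t/(2t)$ and deriving the parabolic PDE) and for the regimes $m \geq 2$ and $m = 0$. For the central case $m \in (0,2)$, however, you take a genuinely different route, and it is worth spelling out how the two compare. The paper isolates the spatial singularity of $f$ by writing $f(t,x) = f(t,0) - x^{1-m/2}(2t)^{-(n-m)/2}\psi(x/(2t))$ and then regularizes the \emph{function} $\psi$, replacing it by its tangent-line continuation $\psi_\eps$ past $\eps$; this feeds into Tanaka's formula for $V = X^{1-m/2}/(2-m)$ and a six-term It\^o decomposition that must be controlled term by term as $\eps \downarrow 0$. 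You instead regularize the \emph{process} by applying It\^o's formula to $f(t, X_t + \eps)$; evaluating the PDE at the shifted argument collapses the drift to the single singular term $-2\eps\int_0^\cdot f_{xx}(u, X_u+\eps)\,\ud u$, and the Beta/Gamma asymptotics you derive for $f_{xx}(u,x)$ as $x \downarrow 0$ together with the occupation-time formula for $V$ recover the local-time integral with the correct constant $\Gamma(m/2)/\Gamma(n/2-1)$. I checked the asymptotic $f_{xx}(u,x) \sim \frac{\Gamma(m/2+1)}{\Gamma(n/2-1)}\,x^{-m/2-1}(2u)^{-(n-m)/2}$ and the normalising limit $\eps\int_0^\infty (x+\eps)^{-m/2-1}x^{m/2-1}\,\ud x = 2/m$; both are correct and produce the claimed coefficient. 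Your approach buys a shorter It\^o identity and locates the local time in a transparent $\eps\,(X+\eps)^{-1-m/2}$ approximate identity; the paper's approach buys explicit control of the function $\psi$ (nonnegative, decreasing, convex, $\psi(0)$ computable) which makes the $\eps\to 0$ passage mechanical. A few details you should not skip in a careful writeup: (i) since $f$ is $C^{1,2}$ only on $(0,\infty)^2$, It\^o must be applied from $t_0 > 0$ with $t_0 \downarrow 0$ at the end, as the paper does; (ii) the asymptotic for $f_{xx}$ holds only as $x \to 0$, so the drift estimate requires a uniform-on-compacts remainder bound (splitting into $\{X_u \leq \delta\}$ and $\{X_u > \delta\}$) to pass to the limit; (iii) the $L^2$ convergence of the stochastic integrals and the local-integrability of $f_x(u, X_u)^2 X_u$ near the zero set of $X$, which the paper handles via the finiteness of $\int_0^\cdot X_u^{1-m}\,\ud u = [V]$, need to be stated. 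None of these is an obstruction, and your route is a valid and arguably slicker alternative for this step.
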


Section~\ref{S:2} contains a mostly analytic proof of Theorem~\ref{T:1}. Section \ref{S:3} contains an alternative proof, using more probabilistic arguments, for the case $n > 2$. This alternative route provides further intuition on the appearance of the local time in the \textsc{Doob-Meyer}  decomposition of $Z$. (Furthermore, this alternative route helped us to formulate the precise statements of Theorem~\ref{T:1}.) Lemma~\ref{L:180223} in Section~\ref{A:1} below summarises some results concerning the \textsc{Markov} local time process $\loct$, appearing in  \eqref{eq:180228.4}.

\begin{rem}
Here is a quick argument why $Z$ is a strict supermartingale if $m \in [0,2)$ and $n>2$.
  In general, the strict supermartingale property of $Z$ will follow from the non-constant finite-variation terms in \eqref{eq:180228.4} and \eqref{eq:180228.8} in the \textsc{Doob-Meyer} decomposition of $Z$.  All these assertions shall be  argued in the proof of Theorem~\ref{T:1}.
Assume now that $0 \leq m < 2 < n$, and suppose (as we shall see, by way of contradiction) that $Z$ is a local martingale. Since $X$ and $Y$ are independent and since the function $s$ is decreasing, we have
\[
Z_t \leq \E [s(Y_t)] = f(t,0) < \infty, \qquad t > 0.
\]
Since $Z$ is additionally strictly positive (recall that $n > 2$ is assumed), hence bounded, it would then follow that $(Z_t)_{t > 0}$ is an actual martingale. This would imply by \textsc{Fatou}'s lemma (note that $t = 0$ was not covered) that $Z$ is an actual martingale. But this is impossible, since it would have constant expectation, meaning that $s(X+Y)$ also has constant expectation, contradicting the fact that it is a strict local martingale; see \eqref{eq:180311.3} below. Therefore, we obtain that $Z$ fails to be a local martingale whenever $0 \leq m < 2 < n$.
\end{rem}

\begin{rem}
The special cases $n=3$ and $m \in \{1,2\}$ in Theorem~\ref{T:1} are studied in \cite{Foellmer_Protter_2010} and \cite{Larsson_2013}. When $n = 3$ and $m = 1$, using \eqref{eq:180225.1} we obtain
\begin{align*} 
Z_t &= \int_0^\infty \frac{1}{  2t \sqrt{X_t + y}} \exp\left(-\frac{y}{2t}\right) \ud y \\
&= \exp\left(\frac{X_t}{2t}\right)  \int_{X_t}^\infty   \frac{1}{  2t \sqrt{ w}} \exp\left(-\frac{w}{2t}\right) \ud w\\
&= \frac{1}{\sqrt{t} }\exp\left(\frac{X_t}{2t}\right) \int_{\sqrt{X_t/t}}^\infty  \exp\left(-\frac{y^2}{2}\right) \ud y, \\
&=  \sqrt{\frac{2 \pi}{ t}} \exp\left(\frac{X_t}{2t}\right) \left(1 - \Phi\left(\sqrt{\frac{X_t}{t}}\right)\right), \qquad t \geq 0,
\end{align*}
where $\Phi$ denotes the cumulative normal distribution. Recall the discussion after \eqref{eq:loct}, note that $\loct$ in \eqref{eq:180228.4}  is the semimartingale local time of $\sqrt{X}$. In contrast, \cite{Foellmer_Protter_2010} uses Brownian local time. These local times differ by a factor of $2$; see  \cite[Exercise~VI.1.17]{RY}. This explains the slight difference in the presentation of the finite-variation part in   \eqref{eq:180228.4}  from its representation in \cite{Foellmer_Protter_2010}. 

When $n = 3$ and $m = 2$, we obtain
\begin{align*}
Z_t &= \frac{1}{  \sqrt{2 \pi t}} \int_0^\infty \frac{1}{  \sqrt{y (X_t + y)}} \exp\left(-\frac{y}{2t}\right) \ud y \\
&= \frac{1}{\sqrt{2 \pi t}} \exp\left(\frac{X_t}{4t}\right) \int_1^\infty \frac{1}{\sqrt{w^2-1}} \exp\left(-\frac{w X_t}{4t}\right) \ud y \\
&=  \frac{1}{\sqrt{2 \pi t}} \exp\left(\frac{X_t}{4t}\right) K_0\left(\frac{X_t}{4t}\right), \qquad t \geq 0,
\end{align*}
where 
\[
(0, \infty) \ni k \mapsto K_0(k) \dfn \int_1^\infty \frac{1}{\sqrt{y^2-1}} \e^{-y k} \ud y
\]
denotes the modified  \textsc{Bessel} function of the second kind of order zero.
\end{rem}

\begin{rem}
As pointed out by \cite{Shiga:Watanabe}, if $X$ and $Y$ are appropriately chosen squared radial \textsc{Ornstein-Uhlenbeck} processes (of which squared \textsc{Bessel} processes are special cases) then so is $X+Y$. While it should be possible to extend the  arguments below to the case that  $X$, $Y$, and $X+Y$ are squared radial \textsc{Ornstein-Uhlenbeck} processes (such that $X+Y$ converted to natural scale is a local martingale), the notation would get unnecessarily complicated. We choose to sacrifice this bit of generality for more transparent formulas.
\end{rem}

\begin{rem} 
The function $f$ of \eqref{eq:180225.1} satisfies the partial differential equation
\begin{equation} \label{eq:180228.1}
f'_t(t,x) + m f'_x(t,x) + 2 x  f''_{x,x} (t,x) = 0, \qquad (t,x) \in (0,\infty)^2.
\end{equation}
This partial differential equation is derived from the assertion of Theorem \ref{T:1} via an application of \textsc{It\^o}'s formula to the local martingale $f(\cdot, X_{\rho \wedge \cdot})$---see Step 2 of the theorem's proof. The required derivatives of $f$ in \eqref{eq:180228.1} exist due dominated convergence.
\end{rem}

\section{Squared Bessel Processes and Their Markov Local Time} \label{A:1} 

We keep all notation from Section \ref{S:main}, and discuss here some useful properties of squared \textsc{Bessel} processes and their \textsc{Markov} local time.

\subsection{Facts concerning squared \textsc{Bessel} processes} \label{SS:2.1}

According to  \cite[Corollary~XI.1.4]{RY}, the process $Y$ has a density (with respect to \textsc{Lebesgue} measure), given by
\begin{equation} \label{eq:180225.4}
\P [Y_t \in \ud y] = \frac{1 }{\Gamma\left(\pare{n-m}/{2}\right) (2t)^{(n-m)/2}  }  y^{(n-m)/2 - 1} \exp\left(-\frac{y}{2t}\right) \ud y, \qquad t > 0, \quad y \geq 0.
\end{equation}
By \textsc{Feller}'s test of explosions, for $m \geq 2$, the process $X$ is strictly positive. For $m \in (0,2)$, $X$  visits level zero, but is instantaneously reflected there, i.e.,
\begin{equation} \label{180311.4}
\int_0^\cdot \indic_{\{X_t = 0\}} \ud t = 0.
\end{equation}
For $m = 0$, the process $X$ is absorbed when it hits zero. For a proof of these facts, see, for example, \cite[ Proposition~XI.1.5]{RY}  (but note that semimartingale local time is used there, while we shall only consider \textsc{Markov} local time---see Remark~\ref{R:Appendix} later on). When $m\in(0,2)$, the process $X$ accrues local time; i.e., with positive probability its \textsc{Markov} local time at zero is strictly positive. This is a consequence of Lemma~\ref{L:180223}.

Direct computations with the density of $X+Y$, or \cite[Example~1]{Kotani_2006}, yield that
\begin{equation}\label{eq:180311.3}
s(X+Y) \text{ is a strict local martingale in the  filtration } \filt{G}.
\end{equation}
Similarly, the process $X^{1-m/2}_{\cdot \wedge \rho}$ is a martingale. This yields that the $\filt{F}$ stopping time
\begin{align} \label{eq:180311.2}
\rho_\kappa \dfn \inf \left \{ t \geq 0 \ \Big| \ X_t \leq \frac{1}{\kappa} \right \},
\end{align}
where $\kappa>1$, has unbounded support. An alternative justification is provided in \cite[Corollary~1.2]{Bruggeman_Ruf}.

\subsection{\textsc{Markov} local time}

The next result discusses properties of local time of $X$ at zero.

\begin{lem}  \label{L:180223}
	Assume that $0 < m < 2$. Then the process $\loct$ defined via
	\[
	\loct_t \dfn \lim_{\varepsilon \downarrow 0} m \varepsilon^{-m/2} \int_0^t 
	\indic_{\{X_u < \varepsilon  \}} \ud u, \qquad t \geq 0.
	\]
	is a nondecreasing continuous additive functional, whose inverse, given by
 \begin{align} \label{eq:180225.6}
	A_s \dfn \inf \{t \geq 0: \loct_t> s\}, \qquad s \geq 0,
\end{align}	
	has (conditional) \textsc{Laplace} transform
\begin{align}  \label{eq:180223.1}
	\E \left[\e^{- z A_s}| \cF_{\rho} \right]  = \exp\left(-z\rho - \frac{\Gamma\left( m / 2 \right)} {\Gamma\left(1 -  m / 2 \right)} s \left(\frac{z}{2}\right)^{1 - m/2 }  \right), \qquad z \geq 0.
\end{align}
	Here, the stopping time $\rho$ is given in \eqref{eq:180225.7}.
Furthermore, we have
\begin{align}   \label{eq:180228.7}
\frac{1}{1 - m/2} X^{1-m/2} = \frac{1}{1 - m/2} +  2 \int_0^\cdot X^{(1-m)/2}_u \indic_{\{X_u > 0\}} \ud B^X_u + \loct.
\end{align}
\end{lem}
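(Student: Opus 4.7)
The argument splits into three components: deriving the semimartingale decomposition \eqref{eq:180228.7}, identifying $\loct$ defined through the limit with the resulting semimartingale local time, and computing the Laplace transform \eqref{eq:180223.1} of the inverse. I expect the first two to be routine applications of stochastic calculus, while the third will require It\^o's excursion theory together with a delicate constant-chasing.

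For the first step, set $M \dfn X^{1-m/2}/(1-m/2)$ and note that the map $s(x) \dfn x^{1-m/2}/(1-m/2)$ is a scale function for $X$, i.e., $\mathcal{L}s = 0$ on $(0,\infty)$ for the generator $\mathcal{L}f = 2x f'' + m f'$. Applying It\^o's formula to $s(X)$ on the stopped intervals $[0, \rho_\kappa]$ from \eqref{eq:180311.2}, on which $s$ is smooth, shows that the drift vanishes there and only the martingale part $2 \int X_u^{(1-m)/2}\, \ud B^X_u$ survives. Since $M\geq 0$ will then be a continuous semimartingale on all of $[0,\infty)$, Tanaka's formula at level zero delivers
\[
M_t = M_0 + \int_0^t \indic_{\{M_u > 0\}}\, \ud M_u + \tfrac{1}{2} L^0_t(M),
\]
where $L^0(M)$ denotes the semimartingale local time of $M$ at the origin. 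Since $\{M_u > 0\} = \{X_u > 0\}$, the preceding identification of the martingale part together with setting $\loct \dfn \tfrac{1}{2} L^0(M)$ yields \eqref{eq:180228.7}.

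To match this $\loct$ with the Markov local time limit from the statement, I would invoke the occupation density formula. The speed measure of $X$ near zero is $\mu(\ud x) = \tfrac{1}{2} x^{m/2-1}\, \ud x$, so $\mu((0, \varepsilon)) = \varepsilon^{m/2}/m$; consequently, the Markov local time $\ell^0(X)$ of $X$ at zero---normalised via $\int h(X_u)\, \ud u = \int h(a) \ell^a_t(X) \mu(\ud a)$---satisfies $\int_0^t \indic_{\{X_u < \varepsilon\}}\, \ud u \sim \varepsilon^{m/2} m^{-1} \ell^0_t(X)$ as $\varepsilon \downarrow 0$, so the normalising factor $m\,\varepsilon^{-m/2}$ in the definition of $\loct$ produces exactly $\ell^0(X)$ in the limit. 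A parallel calculation using $\ud\langle M\rangle_u = 4 X_u^{1-m} \indic_{\{X_u > 0\}}\, \ud u$ and the occupation density formula for the semimartingale $M$ identifies $\tfrac{1}{2} L^0(M)$ with this same $\ell^0(X)$, reconciling the two definitions of $\loct$. Continuity, monotonicity, the additive-functional property, and existence of the defining limit are then inherited from the corresponding properties of the semimartingale local time.

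For the Laplace transform \eqref{eq:180223.1}, I would invoke It\^o's theory of excursions of $X$ from zero (using $\loct$ as the reference local time). By \eqref{180311.4}, $X$ is instantaneously reflected at zero for $t\geq \rho$, so conditionally on $\mathcal{F}_\rho$ the process $(A_s - \rho)_{s \geq 0}$ is a subordinator independent of $\mathcal{F}_\rho$. The Bessel scaling identity $(c X_{t/c})_{t \geq 0} \stackrel{d}{=} (X_t)_{t \geq 0}$ started from zero induces $\loct_t \stackrel{d}{=} c^{1-m/2} \loct_{t/c}$, forcing this subordinator to be stable of index $\alpha \dfn 1-m/2$, so its Laplace exponent takes the form $\psi(z) = C z^\alpha$. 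Extracting the precise constant $C = \Gamma(m/2)/[\Gamma(1-m/2) \cdot 2^\alpha]$ is the delicate step: it can be pinned down either by invoking the explicit form of the It\^o excursion measure of a Bessel process of dimension $m$ from the literature, or by computing $\E[\loct_t]$ asymptotically from the explicit density of $X$ started at $0$ (the analogue of \eqref{eq:180225.4} with $m$ in place of $n-m$) and inverting. This constant-matching step, together with the reconciliation between the $m\,\varepsilon^{-m/2}$ normalisation of $\loct$ and the standard conventions for Bessel local time, is the main technical obstacle.
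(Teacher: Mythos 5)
Your derivation of \eqref{eq:180228.7} and your identification of $\loct$ with the Markov local time essentially mirror the paper's proof: the paper works with $V = X^{1-m/2}/(2-m)$ (your $M = 2V$, a harmless rescaling since semimartingale local time is homogeneous), applies Tanaka at level zero, and then uses the occupation time formula for $V$ together with $\ud[V,V]_u = X_u^{1-m}\indic_{\{X_u > 0\}}\ud u$ to show that $l^0(V)$ coincides with the $\varepsilon$-limit defining $\loct$; this is exactly the ``parallel calculation'' you gesture at, and it does go through. Your speed-measure computation $\mu((0,\varepsilon)) = \varepsilon^{m/2}/m$ is correct, and it correctly explains why the paper's normalisation $m\varepsilon^{-m/2}$ produces the canonical Markov local time.

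The gap is in \eqref{eq:180223.1}, and it is the part of the lemma that actually carries quantitative content. Your scaling argument correctly forces $(A_s - \rho)_{s \ge 0}$ to be, conditionally on $\cF_\rho$, a $(1-m/2)$-stable subordinator, so $\psi(z) = C z^{1-m/2}$; but the entire point of the Laplace-transform formula is the explicit constant $C = 2^{m/2-1}\,\Gamma(m/2)/\Gamma(1-m/2)$, and your proposal leaves it undetermined, flagging it as ``the main technical obstacle.'' The paper closes this by citing Row~2 of Pitman--Yor~(1999), which expresses $\psi(z)$ as $\bigl(\int_0^\infty \e^{-zu}\, p_u(0,0)\,\ud u\bigr)^{-1}$ with $p_u(0,0) = \frac{1}{2^{m/2-1}u^{m/2}\Gamma(m/2)}$ the transition density of the BESQ($m$) process at the origin (in the normalisation consistent with $\loct$); evaluating the integral immediately gives the stated constant. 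Your second proposed route --- computing $\E[\loct_t]$ asymptotically from the BESQ($m$) density and inverting --- is in fact equivalent to this and would work, but as written it is a plan rather than a proof. In short, the first two assertions are handled correctly, the third is reduced to a single unevaluated constant, and you should finish it either by the Pitman--Yor transition-density identity or by carrying out the $\E[\loct_t]$ computation you sketch.
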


\begin{rem} \label{R:Appendix}
The process $\loct$ of Lemma~\ref{L:180223} is sometimes called ``\textsc{Markov}'' local time, in contrast to ``semimartingale'' local time, which only exists for semimartingales. For \textsc{Markov} semimartingales, these two local times may differ; however, as \eqref{eq:180228.7} shows, here the \textsc{Markov} local time $\loct$ of $X$ at zero is also the semimartingale local time of the process $X^{1-m/2} / (2-m)$ at zero. We refer to \cite{Gradinaru:Roynette:Vallois:Yor} and \cite{Donati-Martin:Roynette} for a deeper study of \textsc{Bessel} local time.
\end{rem}

\begin{proof}[Proof of Lemma~\ref{L:180223}]
We refer to \cite[Section~II.2 and Appendix~1.23]{Borodin_handbook}, where properties of $\loct$ are discussed, and further references are given.

The \textsc{Laplace} transform of $A$ is, to the best of our knowledge, first discussed in \cite{Molchanov:Ostrovskii}, but with $X$ replaced by $\sqrt{2 X}$ and some missing constants. To  argue \eqref{eq:180223.1},  note that 
\[
A_s = \rho + \inf \{t \geq 0: \loct_{t+\rho} - \loct_\rho > s\}, \qquad s \geq 0.
\]
Hence `Row~2' in
 \cite{Pitman:Yor:1999} yields
\begin{align*}
\E\left[\e^{- z A_s}| \cF_{\rho} \right]  &= \exp\left(-z \rho - s  \pare{\int_0^\infty  \e^{-z u}  \frac{1}{2^{m/2-1} u^{m/2} \Gamma( m / 2)}  \ud u}^{-1} \right)\\
		&= \exp\left(-z \rho - s  \frac{ 2^{m/2-1} \Gamma\left( m / 2 \right)   }{\int_0^\infty  \e^{-z u}   u^{1 - m/2  -1 }  \ud u} \right)\
		= \exp\left(-z \rho - s   \frac{ 2^{m/2-1} \Gamma\left( m / 2 \right) z^{1-m/2}  }{ \Gamma\left(1-  m / 2 \right) } \right),
\end{align*}
where we have used the transition density of $X$, provided in in \cite[Appendix~1.23 ]{Borodin_handbook} or in \cite[Corollary~XI.1.4 ]{RY}.

Continuing, the process $V \dfn X^{1-m/2} / (2-m)$  is a diffusion in natural scale; thus a semimartingale; see, for example, \cite[Lemma~5.22 ]{Assing:Schmidt}. 
The \textsc{Tanaka} formula then yields
\begin{align} 
	V &= \max(V, 0) = V_0 + \int_0^\cdot \indic_{\{V_u > 0\}} \ud V_u + \frac{1}{2}  l^0  =  V_0 + \int_0^\cdot \indic_{\{X_u > 0\}}
		 X^{(1-m)/2}_u \ud B^X_u +  \frac{1}{2}  l^0 \notag \\
		&=  V_0 + \int_0^\cdot \indic_{\{V_u > 0\}}  \left((2-m) V_u\right)^{(1-m)/(2-m)} \ud B^X_u + \frac{1}{2} l^0. \label{eq:180308.4}
\end{align}
Here, $(l^v)_{v \geq 0}$ denotes the semimartingale local time of $V$, continuous in time and right-continuous in the spatial variable $v \geq 0$, satisfying the \emph{occupations time formula}
	\begin{align*}
			\int_0^t g(V_u) \ud [V,V]_u  = \int_0^\infty g(v)  l^v_t \ud v, \qquad t \geq 0,
	\end{align*}
for all Borel--measurable functions $g: [0,\infty) \rightarrow [0, \infty)$; see \cite[Section~VI.1]{RY}.  Hence, we also have	
	\begin{align}  \label{eq:180308.2}
			\int_0^t g(V_u) \ud u =   \int_0^t g(V_u) \indic_{\{V_u > 0\}} \ud u  = \int_0^\infty g(v)  l^v_t \left((2-m)v\right)^{(2m-2)/(2-m)} \ud v
	\end{align}	
	for all Borel--measurable functions $g: [0,\infty) \rightarrow [0, \infty)$
	Here, the first equality follows from the fact that $X$, and hence $V$, are  \textsc{Lebesgue}--almost everywhere strictly positive, by \eqref{180311.4}. 
	Now, the continuity properties of $(l^v)_{v \geq 0}$ and \eqref{eq:180308.2} yield
	\begin{align*}
		l^0_t &= \lim_{\varepsilon \downarrow 0} \frac{1}{\int_0^\varepsilon  \left((2-m)v\right)^{(2m-2)/(2-m)} \ud v}
			\int_0^\varepsilon   l^v_t \left((2-m)v\right)^{(2m-2)/(2-m)} \ud v\\
		&= \lim_{\varepsilon \downarrow 0} m  \left((2-m)\varepsilon\right)^{m/(m-2)} 
		  \int_0^t \indic_{\{V_u < \varepsilon\}} \ud u \\
		&= \lim_{\varepsilon \downarrow 0}  m  \left((2-m)\varepsilon\right)^{m/(m-2)} \int_0^t 
			\indic_{\{X_u < ((2-m)\varepsilon)^{2/(2-m) }  \}} \ud u 
		= \lim_{\varepsilon \downarrow 0} m    \varepsilon^{-m/2} \int_0^t 
			\indic_{\{X_u < \varepsilon  \}} \ud u \\
		&=  \loct_t, \qquad t \geq 0,
	\end{align*}
where the last equality follows from the definition of $\loct$. Then, \eqref{eq:180228.7} follows from \eqref{eq:180308.4} and the above equality.
\end{proof}

\section{A Mostly Analytic Proof of Theorem~\ref{T:1}}  \label{S:2}

\subsection{Three technical lemmas}
Before we embark on proving Theorem \ref{T:1}, we shall provide some auxiliary analytic results.

\begin{lem}  \label{L:180228.1}
Assume that $m \in [0,2)$ and recall the function $f$ from \eqref{eq:180225.1}. Then, with
\begin{align} 
(0, \infty) \ni x \mapsto \psi (x) \dfn \frac{1}{\Gamma \left(\pare{n - m} / {2}\right)} \times 
\begin{cases}
\int_0^\infty \pare {1 - \pare{{w} / \pare{1 + w}}^{n/2 -1}} w^{- m/2} \e^{-x  w} \ud w, \quad &\text{if $n>2$}; \label{eq:180309.2} \\
\int_0^\infty \log\pare {1 + 1/w} w^{- m/2} \e^{-x  w} \ud w, \quad &\text{if $n=2$},
\end{cases}
\end{align}
it holds that $\psi \in C^\infty((0,\infty))$, that
\begin{align} \label{eq:180311.1}
f(t,x) = f(t, 0) - \frac{x^{1 - m / 2} }{(2t)^{(n- m )/2}} \psi \left(\frac{x}{2t}\right), \qquad t > 0, \quad x > 0,
\end{align}
and that
\begin{align} \label{eq:180311.1'}
\lim_{x \downarrow 0} \frac{x^{1 - m / 2} }{(2t)^{(n- m )/2}} \psi \left(\frac{x}{2t}\right) =0, \qquad t > 0.
\end{align}
\end{lem}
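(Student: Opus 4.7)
The plan is to establish the identity \eqref{eq:180311.1} by a pair of changes of variables in the defining integral \eqref{eq:180225.1}, and then to verify smoothness and the limit via direct estimates on the integrand of $\psi$.

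For the identity, I would first substitute $u = 2tw$ in \eqref{eq:180225.1} to rewrite, for $n > 2$,
\[
f(t,x) = \frac{(2t)^{-(n-m)/2}}{\Gamma((n-m)/2)} \int_0^\infty (x+u)^{1-n/2} u^{(n-m)/2-1} \e^{-u/(2t)} \ud u,
\]
with the obvious modification $-\log(x+u)$ in place of the power when $n = 2$. Setting $x = 0$ gives the closed form $f(t,0) = (2t)^{1-n/2}\,\Gamma(1-m/2)/\Gamma((n-m)/2)$, finite precisely because $m < 2$. For the difference $f(t,0) - f(t,x)$, the algebraic identity $u^{1-n/2} - (x+u)^{1-n/2} = u^{1-n/2}[1 - (u/(u+x))^{n/2-1}]$ collapses the integrand to $u^{-m/2}\,[1 - (u/(u+x))^{n/2-1}]\,\e^{-u/(2t)}$. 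A second substitution $u = xv$ then extracts the prefactor $x^{1-m/2}$, converts $u/(u+x)$ into $v/(1+v)$, and produces precisely the integral defining $\psi(x/(2t))$ in \eqref{eq:180309.2}. The $n = 2$ case is analogous, using $-\log(2tw) + \log(x + 2tw) = \log(1 + x/u)$, which becomes $\log(1 + 1/v)$ after $u = xv$.

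For smoothness on $(0,\infty)$, I would differentiate under the integral sign. The $k$-th formal derivative in $x$ produces the integrand $[1 - (w/(1+w))^{n/2-1}]\,w^{-m/2}\,(-w)^k\,\e^{-xw}$ (or with $\log(1 + 1/w)$ for $n = 2$). Near $w = 0$ the bracket is bounded and the power $w^{k-m/2}$ is integrable since $m < 2$; at infinity, Taylor expansion gives $1 - (1 - 1/(1+w))^{n/2-1} = O(1/w)$ (and likewise $\log(1 + 1/w) = O(1/w)$), so the full integrand decays like $w^{k-1-m/2}\,\e^{-xw}$. On any compact interval $[a,b] \subset (0,\infty)$, the uniform bound $\e^{-xw} \leq \e^{-aw}$ makes the dominating function integrable, so dominated convergence yields $\psi \in C^\infty((0,\infty))$.

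Finally, the limit \eqref{eq:180311.1'} reduces to showing $x^{1-m/2}\,\psi(x/(2t)) \to 0$ as $x \downarrow 0$. If $m \in (0,2)$, the tail estimate $w^{-m/2-1}$ is integrable uniformly in $x$, so $\psi$ is bounded in a neighborhood of $0$ and the conclusion follows from $1 - m/2 > 0$. The delicate case is $m = 0$: the tail is only $O(w^{-1})$, and splitting the integral at $w = 1$ followed by the substitution $u = yw$ yields $\psi(y) = O(\int_y^\infty u^{-1} \e^{-u}\,\ud u) = O(|\log y|)$ via the exponential integral. Thus $x^{1-m/2}\,\psi(x/(2t)) = x \cdot O(|\log x|) \to 0$. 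This logarithmic estimate in the $m = 0$ case is the main technical obstacle; once it is in hand, the argument is complete.
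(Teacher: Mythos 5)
Your derivation of the identity \eqref{eq:180311.1} is essentially the paper's: both arguments rescale the integral so that the factor $(2t)^{1-n/2}$ pulls out, then substitute to normalize $x$ and isolate the prefactor $x^{1-m/2}$ and the integral defining $\psi$; the paper does this slightly more compactly by working with $f(t,2tx)$ directly, but the two sequences of substitutions are equivalent. You also give a genuine argument for $\psi\in C^\infty((0,\infty))$ via differentiation under the integral sign, which the paper asserts but does not spell out, so this is a useful addition.

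Where you diverge is the limit \eqref{eq:180311.1'}. You estimate $\psi$ near $0$ directly, splitting into the cases $m\in(0,2)$ (where $\psi(0+)<\infty$ because $w^{-m/2-1}$ is integrable at infinity) and $m=0$ (where the tail is only $O(w^{-1})$ and you get $\psi(y)=O(|\log y|)$ via the exponential integral), and then multiply by $x^{1-m/2}$. That works, but it is more involved than what the paper does: the paper simply observes that $f(t,\cdot)$ is continuous at $x=0$ (which follows by dominated convergence from \eqref{eq:180225.1}, using that the $w^{-m/2}\e^{-w}$ majorant is integrable precisely when $m<2$), and then \eqref{eq:180311.1'} is immediate from the identity \eqref{eq:180311.1} itself. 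Your route is self-contained and exposes the logarithmic blow-up of $\psi$ at the origin when $m=0$, which is information the paper's shortcut does not make visible, but the paper's continuity argument is cleaner and avoids the $m=0$ case distinction entirely.
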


\begin{proof}
Let us  only consider the case $n>2$; the case $n=2$ follows in the same manner.
Since
\[
f(t, 2 t x) =  \frac{1}{\Gamma \left( (n - m)/2 \right)} (2t)^{1 - n/2 } \int_0^\infty \pare{\frac{w}{x + w}}^{n/2 -1} w^{- m/2 } \e^{-w} \ud w,
\]
for all $t > 0$ and $x > 0$,
we have 
\begin{align*}
(2t)^{n/2 - 1} \frac{f(t, 0) - f(t, 2 t x)}{x^{1 - m / 2}} &= \frac{1}{\Gamma \left( (n - m)/2 \right)}   \int_0^\infty \pare {1 - \pare{\frac{w}{x + w}}^{n/2 -1}} \pare{\frac{w}{x}}^{- m/2} e^{- w} \frac{\ud w}{x} \\
&=  \frac{1}{\Gamma \left( (n - m)/2 \right)}  \int_0^\infty \pare {1 - \pare{\frac{w}{1 + w}}^{n/2 -1}} w^{- m/2} e^{-x  w} \ud w
\\
&= \psi (x).
\end{align*}
Therefore, substituting $x$ for $2tx$, we obtain \eqref{eq:180311.1}. Finally, \eqref{eq:180311.1'} follows from the continuity of $f$ as seen easily in \eqref{eq:180225.1}.
\end{proof}

\begin{lem} \label{L:180309.1}
Assume that $m \in [0,2)$, and recall the function $\psi$ from \eqref{eq:180309.2}. Define the function
\begin{align*}
(0, \infty) \ni x \mapsto p (x) \dfn - x^{1 - m/2} \psi'(x).
\end{align*}
Then, $p$ is nonnegative and decreasing with $0 < p (0+) < \infty$. As a consequence, $\sup_{x > 0} p(x) < \infty$.
\end{lem}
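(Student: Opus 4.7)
The plan is to reduce all three claims to elementary properties of a single profile function of one variable, obtained after rescaling the integrand.

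First I would differentiate $\psi$ in \eqref{eq:180309.2} under the integral sign (justified for $x > 0$ by standard dominated convergence, using that $w \mapsto e^{-xw}$ decays exponentially at infinity and the residual factor is integrable near $0$ since $m < 2$). For $n > 2$ this yields
\begin{align*}
p(x) = \frac{x^{1-m/2}}{\Gamma((n-m)/2)} \int_0^\infty \pare{1 - \pare{\frac{w}{1+w}}^{n/2-1}} w^{1-m/2} e^{-xw} \,\ud w,
\end{align*}
with the obvious analogue for $n = 2$. The change of variables $u = xw$ then rewrites this as
\begin{align*}
p(x) = \frac{1}{\Gamma((n-m)/2)} \int_0^\infty \phi(x/u)\, u^{-m/2} e^{-u}\,\ud u, \qquad x > 0,
\end{align*}
where $\phi(v) \dfn (1 - (1+v)^{1-n/2})/v$ if $n > 2$ and $\phi(v) \dfn \log(1+v)/v$ if $n = 2$.

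The key observation will be that in both cases $\phi$ is the running average of a positive, strictly decreasing function: $\phi(v) = v^{-1}\int_0^v f(r)\,\ud r$, with $f(r) = (n/2-1)(1+r)^{-n/2}$ when $n>2$ and $f(r) = (1+r)^{-1}$ when $n=2$. From this and the one-line identity $\phi'(v) = (f(v) - \phi(v))/v$, the strict inequality $f(v) < \phi(v)$ (which holds because $f$ is strictly decreasing) yields that $\phi$ is positive and strictly decreasing on $[0,\infty)$, with $\phi(0+) = f(0) > 0$.

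Plugging back into the integral representation, the integrand is nonnegative and strictly decreasing in $x$ for each fixed $u > 0$, immediately yielding both $p \geq 0$ and the strict decrease of $p$ on $(0,\infty)$. The limit $p(0+)$ is evaluated by monotone convergence (since $\phi(x/u) \uparrow \phi(0+)$ as $x \downarrow 0$), giving
\begin{align*}
p(0+) = \phi(0+) \cdot \frac{\Gamma(1-m/2)}{\Gamma((n-m)/2)} \in (0,\infty),
\end{align*}
where finiteness rests on $m \in [0,2)$ ensuring $\Gamma(1-m/2) < \infty$. The final assertion $\sup_{x > 0} p(x) < \infty$ is then immediate. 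I expect the only moderately subtle points to be the justification of differentiation under the integral and the monotonicity of $\phi$; everything else is routine bookkeeping.
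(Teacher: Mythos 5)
Your proof is correct. Both you and the paper differentiate $\psi$ under the integral and perform the same substitution $u = xw$, arriving at the identical intermediate representation
\[
\Gamma\!\left(\tfrac{n-m}{2}\right) p(x) \;=\; \frac{1}{x}\int_0^\infty \left(1 - \left(\frac{u}{x+u}\right)^{n/2-1}\right) u^{1-m/2}\,\e^{-u}\,\ud u.
\]
Where the two arguments diverge is in the endgame. The paper interprets this as a difference quotient $\bigl(L(0)-L(x)\bigr)/x$ for the auxiliary function $L(x) = \int_0^\infty \bigl(v/(x+v)\bigr)^{n/2-1} v^{1-m/2}\e^{-v}\,\ud v$, then applies the fundamental theorem of calculus and differentiation under the integral to obtain the explicit double integral
\[
p_0(x) = \left(\tfrac{n}{2}-1\right)\int_0^\infty\int_0^1 \frac{v^{(n-m)/2}}{(tx+v)^{n/2}}\,\e^{-v}\,\ud t\,\ud v,
\]
from which nonnegativity, monotonicity in $x$, and the limit at $0+$ are all read off directly. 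You instead separate variables into a single integral against a one-variable profile $\phi(v)$ and use the clean structural observation that $\phi$ is the running average of a positive strictly decreasing $f$, hence positive, strictly decreasing, with $\phi(0+)=f(0)$. Both routes are elementary and yield the same value $p(0+) = \bigl(n/2-1\bigr)\Gamma(1-m/2)/\Gamma\bigl((n-m)/2\bigr)$ for $n>2$ (with the analogous constant for $n=2$). Your reduction to the scalar profile $\phi$ is arguably the tidier way to isolate exactly what drives the monotonicity; the paper's version has the minor advantage of producing a manifestly positive, decreasing integrand in closed form.
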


\begin{proof}
We just consider the case $n>2$; the case $n=2$ follows in the same manner with the appropriate modifications. To simplify notation we shall consider the function $p_0 \dfn \Gamma((n-m)/2) p$.
Simple algebra and a change of variables gives
\begin{align*}
p_0 (x) &= \int_0^\infty \frac{1}{x} \pare {1 - \pare{\frac{w}{1 + w}}^{n/2 -1}} (x w)^{1 - m/2} e^{-x  w} \ud (x w) \\
&= \frac{1}{x} \int_0^\infty  \pare {1 - \pare{\frac{v}{x + v}}^{n/2 -1}} v^{1 - m/2} e^{-v} \ud v\\
&= \frac{1}{x} (L(0) - L(x)), \qquad x > 0,
\end{align*}
where 
\[
	L(x) =  \int_0^\infty  \pare{\frac{v}{x + v}}^{n/2 -1} v^{1 - m/2} e^{-v} \ud v, \qquad x > 0.
\]
Hence we get
\[
	p_0(x)  = -\int_0^1 L'(tx) \ud t =  \left(\frac{n}{2}-1\right) \int_0^\infty  \int_0^1\frac{1}{ \pare{tx + v}^{n/2}} v^{(n - m)/2} e^{-v} \ud t \ud v, \qquad x > 0.
\]
Thus, $p_0$ (and hence $p$) is nonnegative and decreasing with
\[
p_0(0+) =  \left(\frac{n}{2}-1\right) \int_0^\infty v^{ - m/2 } e^{-v} \ud v = 
 \left(\frac{n}{2}-1\right) \Gamma\left(1 - \frac{m }{2} \right).
\]
This concludes the proof.
\end{proof}

\begin{lem}  \label{L:180311.1}
Assume that $m \in (0,2)$. Then, we have
\begin{align*}
\psi(0) =
\begin{cases}
\frac{\Gamma\left(m / 2\right)} {\pare{1 - m / 2 } \Gamma\left( n / 2 - 1 \right)  }, \quad &\text{if $n>2$}; \\
\frac{\Gamma\left(m / 2\right)} {1 - m / 2  }, \quad &\text{if $n=2$}.
\end{cases}
\end{align*}
\end{lem}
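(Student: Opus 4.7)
The plan is to evaluate $\psi(0)$ directly from the definition \eqref{eq:180309.2}, using the change of variables $v = 1/w$ to bring the integrals into a standard form, then an integration by parts, and finally recognising the result as a Beta integral. Note that one cannot split $\int_0^\infty w^{-m/2} \ud w$ off as a separate term since it diverges; the subtraction hidden inside the integrand must be kept together throughout the computation. The restriction $m \in (0, 2)$ will be used crucially both to guarantee convergence and to make the boundary terms in the integration by parts vanish.

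For $n > 2$, the substitution $v = 1/w$ transforms the integral appearing in \eqref{eq:180309.2} into
\[
\int_0^\infty \left(1 - (1+v)^{1-n/2}\right) v^{m/2 - 2} \ud v,
\]
whose integrand behaves like $(n/2 - 1) v^{m/2 - 1}$ near $v = 0$ and like $v^{m/2 - 2}$ near $v = \infty$, hence is integrable for $m \in (0,2)$. I would then integrate by parts against the antiderivative $v^{m/2 - 1}/(m/2 - 1)$ of $v^{m/2 - 2}$; the boundary contribution vanishes at $0$ because the factor $1 - (1+v)^{1-n/2}$ is $O(v)$, and at $\infty$ because $v^{m/2 - 1} \to 0$. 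What remains is $\frac{n/2 - 1}{1 - m/2} \int_0^\infty v^{m/2 - 1}(1 + v)^{-n/2} \ud v$, and the latter integral is the Beta integral $B(m/2, (n-m)/2) = \Gamma(m/2) \Gamma((n-m)/2)/\Gamma(n/2)$. Using $\Gamma(n/2) = (n/2 - 1)\Gamma(n/2 - 1)$ and dividing by the outer factor $\Gamma((n-m)/2)$ yields the claimed formula.

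The case $n = 2$ follows the same pattern. After the substitution $v = 1/w$, the integral becomes $\int_0^\infty \log(1 + v) v^{m/2 - 2} \ud v$. Integration by parts with $u = \log(1+v)$ and the same antiderivative as above produces $\frac{1}{1 - m/2} \int_0^\infty v^{m/2 - 1}(1 + v)^{-1} \ud v$, which equals $\frac{1}{1 - m/2} B(m/2, 1 - m/2) = \frac{\Gamma(m/2)\Gamma(1 - m/2)}{1 - m/2}$; dividing by $\Gamma(1 - m/2)$ gives the stated value.

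I expect no conceptual obstacle; the main care required is in the bookkeeping for the integration by parts, namely checking that the boundary terms vanish (this is precisely where $m \in (0, 2)$ is needed), and in remembering to carry along the factor $1/\Gamma((n-m)/2)$ from \eqref{eq:180309.2} at the end.
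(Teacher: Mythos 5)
Your proposal is correct, and it is essentially the paper's argument in a slightly different dress: the paper writes $1-\bigl(w/(1+w)\bigr)^{n/2-1}$ as an integral of its derivative and then swaps the order of integration (a Fubini step), which is exactly the integration by parts you perform after the cosmetic substitution $v=1/w$; both then reduce to the Beta integral and $\Gamma(n/2)=(n/2-1)\Gamma(n/2-1)$. One small nit: at $v\to\infty$ in the $n=2$ case the boundary term is $v^{m/2-1}\log(1+v)$, so you need ``power beats log'' rather than simply $v^{m/2-1}\to 0$, but this is of course true for $m<2$.
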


\begin{proof}
Again, we only treat the case $n > 2$, as the case $n = 2$ can be argued in the same way. Straightforward computations yield
	\begin{align*}
		\int_0^\infty \pare {1 - \pare{\frac{w}{1 + w}}^{n/2 -1}} w^{- m/2} \ud w
			&=\left(\frac{n}{2} - 1\right) \int_0^\infty \int_w^\infty \pare{\frac{v^{n/2-2}}{(1 + v)^{n/2 }}}  \ud v w^{- m/2} \ud w\\
			&= \frac{ n / 2  - 1}{1 - m / 2} \int_0^\infty \frac{v^{(n-m)/2-1}}{(1 + v)^{n/2 }}  \ud v\\
			&= \frac{ n / 2  - 1}{1 - m / 2}  \int_0^1w^{(n-m)/2-1}  (1-w)^{m/2-1}  \ud w
			\\
			&= \frac{ n / 2  - 1}{1 - m / 2}  \frac{\Gamma\left(\pare{n-m} / 2 \right) \Gamma\left( m / 2\right)}{\Gamma\left( n / 2\right) } \\
			&= \frac{1}{1 - m / 2}  \frac{\Gamma\left(\pare{n-m} / {2}\right) \Gamma\left( m / 2 \right)}{\Gamma\left( n / 2 -1 \right) },
	\end{align*}
where we used the substitution $w = v/(1+v)$ in the third equality and 
 the identity
	\begin{align*}
		 \int_0^1 w^{a-1} (1-w)^{b-1} \ud u = \frac{\Gamma(a) \Gamma(b)}{\Gamma(a+b)} , \qquad a,b > 0
	\end{align*}
	in the fourth equality, 
which connects the Beta and Gamma functions. In the last equality of the long display, we have used the identity $\Gamma(k) = (k-1) \Gamma(k-1)$, which holds for all $k > 1$.
\end{proof}

\subsection{Proof of Theorem~\ref{T:1}} We proceed in several steps.

\bigskip

\noindent $\bullet$ \underline{Step 1}:
Using the density provided in \eqref{eq:180225.4}, we obtain
\begin{align*}
f(t,x) = \E[s(x + Y_t)],  \qquad t > 0, \quad x \geq 0,
\end{align*}
where the function $f$ is given in \eqref{eq:180225.1}. Note that the process $f(\cdot, X_\cdot)$ is $\filt{F}$--optional. Since we have already established the existence of the $\filt{F}$-optimal projection $Z$ of $s(X+Y)$ in Remark~\ref{F:1}, it immediately follows that $Z_t = f(t, X_t)$ holds for all $t \geq 0$.

\bigskip

\noindent $\bullet$ \underline{Step 2}:
Consider first the case $n>2$, fix some $\kappa > 1$, and recall the stopping times from \eqref{eq:180311.2}. Then $s(X^{\rho_{\kappa}} + Y^{\rho_{\kappa}})$ is bounded, hence a martingale under $\filt{G}$.  Its $\filt{F}$--optional projection, which is $Z^{\rho_\kappa}$, will also be a martingale. By \textsc{It\^o}'s formula and the fact that the derivatives of $f$ are continuous and the product \textsc{Lebesgue}$\otimes \P$ measure of $\set{(t, \omega): (t, X_t(\omega)) \in U}$ is strictly positive whenever $U$ is a nonempty open subset of $(0, \infty)^2$ due to the unbounded support of $\rho$ of \eqref{eq:180225.7}, the partial differential equation in \eqref{eq:180228.1} holds for all $(t,x) \in (0,\infty)^2$.

Let us now consider the case $n=2$ and fix again some $\kappa > 1$.  In this case,  \textsc{It\^o}'s formula yields
\begin{align*}
	s(X+ Y) = 2 \int_0^\cdot \frac{1}{\sqrt{X_u + Y_u}}  \ud W_u
\end{align*}
for some Brownian motion $W$. Hence, $s(X^{\rho_{\kappa}}+ Y^{\rho_{\kappa}})$ is a martingale under $\filt{G}$. Now we may conclude as in the case $n>2$ that the partial differential equation in \eqref{eq:180228.1} holds.

\bigskip

\noindent $\bullet$ \underline{Step 3}:
When $m \geq 2$, then $\lim_{\kappa \uparrow \infty} \rho_\kappa = \infty$ holds for the stopping times of \eqref{eq:180311.2}, thanks to the facts in \S \ref{SS:2.1}. Hence, $Z$ is indeed a local martingale satisfying \eqref{eq:180228.5} by \textsc{It\^o}'s formula and \eqref{eq:180228.1}.  It is, moreover, a strict local martingale since $s(X+Y)$ is not a martingale under $\filt{G}$, as noted in \eqref{eq:180311.3}.

\bigskip

\noindent $\bullet$ \underline{Step 4}: We now focus on the case $0 < m < 2$ and argue the finite-variation term appearing in the \textsc{Doob-Meyer} decomposition of $Z$ in \eqref{eq:180228.4}.
To make headway, Lemma~\ref{L:180228.1} yields
\[
Z_t = f(t, X_t) = 
 f(t, 0) - \frac{X_t^{1 - m / 2} }{(2t)^{(n- m )/2}} \psi \left(\frac{X_t}{2t}\right), \qquad t > 0.
\]
where the function $\psi$ is given in \eqref{eq:180309.2}. Unfortunately, since $\psi'(0) = -\infty$ by Lemma~\ref{L:180309.1}, we cannot use the product rule directly. Instead, we shall approximate the function $\psi$. For  $\varepsilon > 0$, define the function $\psi_\varepsilon: [0,\infty) \rightarrow \mathbb{R}$ by
$\psi_\varepsilon (x) = \psi(x)$ for all $x > \varepsilon$ and by
\[
\psi_\varepsilon(x) = \psi(\varepsilon) + \psi'(\varepsilon) (x - \varepsilon), \qquad x \in [0, \epsilon].
\]
Since $\psi$ is nonnegative, decreasing and convex, the same properties transfer to $\psi_\varepsilon$; furthermore, $\psi_\varepsilon \leq \psi$.

Next, fix some $t_0 > 0$. We shall first derive the dynamics of $Z$ for $t \geq t_0$ via approximation, and then send $t_0$ to zero. Given that $\psi_\varepsilon$ is convex and continuously differentiable on $[0, \infty)$, twice continuously differentiable except at $\varepsilon > 0$, and $\E \bra{\int_0^\infty \indic_{\{X_t = 2 \varepsilon t\}} \ud t} = 0$ holds,  it follows that $(\psi_\varepsilon( X_t/ (2t)))_{t \geq t_0}$ is a semimartingale satisfying
\begin{equation} \label{eq:180309.3}
\psi_\varepsilon \left(\frac{X_t }{2t}\right) =   \psi_\varepsilon \left(\frac{X_{t_0} }{2t_0}\right)  + \int_{t_0}^t \indic_{\{X_u  > 2u \varepsilon\}} \ud \psi \left(\frac{X_u }{2u}\right) + \psi'(\epsilon) \int_{t_0}^t  \indic_{\{X_u \leq 2u \varepsilon\}} \ud \left(\frac{X_u }{2u} \right), \qquad t \geq t_0.
\end{equation}
Define now the process
\begin{align} \label{eq:180309.4}
Z_t^\varepsilon \dfn  
 f(t, 0) - \frac{X_t^{1 - m / 2} }{(2t)^{(n- m )/2}} \psi_\varepsilon \left(\frac{X_t}{2t}\right), \qquad t > 0.
\end{align}
An application of \eqref{eq:180311.1} and integration-by-parts, in conjunction with \eqref{eq:180309.3} and \textsc{Tanaka}'s formula (see \eqref{eq:180228.7}), and recalling the partial differential equation \eqref{eq:180228.1} yield
\begin{align*}
Z^\varepsilon - Z^\varepsilon_{t_0} &= 2 \int_{t_0}^\cdot f'_x (u, X_u) \sqrt{X_u} \indic_{\{X_u > 2u \varepsilon\}}  \ud B^X_u \\
&+ \int_{t_0}^\cdot  f'_t (u, 0) \indic_{\{X_u  \leq 2 u \varepsilon\}}   \ud u
+ \frac{n-m}{2} \int_{t_0}^\cdot \frac{2 X_u^{1 - m / 2} }{(2u)^{(n- m )/2 + 1}} \psi_\varepsilon \left(\frac{X_u}{2u}\right) \indic_{\{X_u \leq 2u \varepsilon\}} \ud u
\\
&- (2-m) \int_{t_0}^\cdot  \frac{ \psi_\varepsilon \left( X_u / (2u) \right)}{(2u)^{(n- m )/2}}  X_u^{(1 - m) / 2}  \indic_{\{0 < X_u \leq 2 u \varepsilon\}} \ud B^X_u
 - \left(1-\frac{m}{2}\right) \psi_\varepsilon(0) \int_{t_0}^\cdot  \frac{\ud \loct_u}{(2u)^{(n- m )/2}}\\
&- \psi'(\epsilon) \int_{t_0}^\cdot \frac{X_u^{1 - m / 2} }{(2u)^{(n- m )/2}} \indic_{\{X_u \leq 2u \varepsilon\}} \ud \left(\frac{X_u }{2u} \right) \\
&- (2-m) \psi'(\epsilon) \int_{t_0}^\cdot \frac{2 X_u^{1 - m / 2}  }{(2u)^{(n- m )/2 + 1}} \indic_{\{X_u \leq 2u \varepsilon\}} \ud u.
\end{align*}
To derive this long display, two cases are considered. Whenever $X_u > 2 u \varepsilon$, then $Z^\varepsilon$ has the dynamics of a local martingale, provided in the first line of the long display. For the second case, namely when $X_u \leq 2 u \varepsilon$, we break up the second term on the right side of \eqref{eq:180309.4} in three components and apply the \textsc{It\^o} product rule. The second line of the long display provides the contribution of the first term in \eqref{eq:180309.4} and of the component involving the power of $t$.  The third line corresponds to the contribution of the power of $X$, after using the \textsc{Tanaka} formula in \eqref{eq:180228.7}.  The fourth line provides the contribution of $(\psi_\varepsilon( X_t / (2 t)))_{t \geq t_0}$, worked out in \eqref{eq:180309.3}. Finally, the last line yields the cross-product dynamics.

We now let $\varepsilon$ go to zero. Then $Z_t^\varepsilon$ tends to $Z_t$, for each $t > 0$.  Let us next consider the right side of the long display. Using \eqref{180311.4} and the bound $\psi_\varepsilon \leq \psi$, the dominated convergence theorem yields that the terms in the second line converge to zero. By a similar argument and \textsc{It\^o}'s isometry, so does the first term of the third line. For the fourth line, we bound the integrand
\[
\Bigg| \psi'(\epsilon) \frac{X_u^{1 - m / 2} }{(2u)^{(n- m )/2}}  \indic_{\{X_u \leq 2u \varepsilon\}} \Bigg| \leq 
\frac{ - \varepsilon^{1-m/2}  \psi'(\varepsilon)}{(2t_0)^{(n - m) /2- 1}}  \indic_{\{X_u \leq 2u \varepsilon\}} \leq \frac{ p(0+)}{(2t_0)^{(n - m) /2- 1}}  \indic_{\{X_u \leq 2u \varepsilon\}} 
\]
in the notation of Lemma~\ref{L:180309.1}. Hence, the term in the fourth line also converges to zero as $\varepsilon$ tends to zero. By exactly the same arguments, so does the term in the last line of the long display.

We are left with two terms. Consider the integral in the first line. Lemma~\ref{L:180228.1} yields that
\[
f'_x (t, x) = - \left(1 - \frac{m}{2}\right) \frac{x^{- m / 2} }{(2t)^{(n- m )/2}} \psi \left(\frac{x }{2t}\right) - \frac{x^{1 - m / 2} }{(2t)^{(n- m )/2}} \psi' \left(\frac{x }{2t}\right) , \qquad t > 0, \quad x > 0,
\]
so that $x  (f'_x(t, x))^2$ behaves like $k x^{1 - m} / t^{n-m}$ when $x \sim 0$, where $k > 0$ is an appropriate constant. However, $\int_0^\cdot X_u^{1-m} \ud u$ is a finite process, because it is (proportional to) the quadratic variation of the local martingale part in the dynamics of $X^{1 - m/2}$. Therefore, it follows that the integral in the first line converges to
\[
2 \int_{t_0}^\cdot f'_x(u, X_u) \sqrt{X_u} \indic_{\{X_u > 0\}}  \ud B^X_u .
\]
The only remaining term, namely the second term in the third line, converges to
\[
- \left(1-\frac{m}{2}\right) \psi (0) \int_{t_0}^\cdot  \frac{1}{(2u)^{(n- m )/2}} \ud \loct_u.
\]

To summarize, we have
\[
Z - Z_{t_0} = 2 \int_{t_0}^\cdot f'_x (u, X_u) \sqrt{X_u} \indic_{\{X_u > 0\}}  \ud B^X_u  - \left(1-\frac{m}{2}\right) \psi (0) \int_{t_0}^\cdot  \frac{1}{(2u)^{(n- m )/2}} \ud \loct_u.
\]
We can now sent $t_0$ to zero, noting that none of the integrals will explode because $X$ is away from zero on the stochastic interval $[0, \rho_2]$, where $\rho_2$ is given as in \eqref{eq:180311.2} with $\kappa = 2$.
 It then follows that
\[
Z = 1 + 2 \int_{0}^\cdot f'_x (u, X_u) \sqrt{X_u} \indic_{\{X_u > 0\}}  \ud B^X_u  - \left(1-\frac{m}{2}\right) \psi (0)\int_{0}^\cdot  \frac{1}{(2u)^{(n- m )/2}} \ud \loct_u.
\]
In conjunction with Lemma~\ref{L:180311.1}, this then yields \eqref{eq:180228.4}.

\bigskip

\noindent $\bullet$ \underline{Step 5}: 
Finally, for the case $m=0$ basic computations with \eqref{eq:180225.1} yield \eqref{eq:180228.8}. 
Indeed, if $n > 2$ we have
\[
f(t,x) = \frac{1}{\Gamma(n/2)} \int_0^\infty (x + 2tw)^{1-n/2} w^{n/2-1} \e^{-w} \ud w , \qquad t > 0, \quad x \geq 0.
\]		
This gives directly
\[
f(t,0) = \frac{(2t)^{1-n/2}}{\Gamma(n/2)} \int_0^\infty w^{1-n/2} w^{n/2-1} \e^{-w} \ud w  = \frac{(2t)^{1-n/2}}{\Gamma(n/2)}, \qquad t > 0.  
\]		
One then concludes by observing that $X$ gets absorbed when hitting zero; hence
\[
f(t, X_t) = f(t \wedge \rho, X_{t \wedge \rho}) + \indic_{\{\rho < t\}} (f(t,0) -f(\rho,0)). 
\]
The case $n=2$ is argued again in exactly the same way. \qed

\section{A Mostly Probabilistic Proof of Theorem~\ref{T:1}}  \label{S:3}
In this section we only consider the case $n > 2$, and provide an alternative proof of Theorem~\ref{T:1} in \S\ref{SS:3.2},
after some prerequisites for a certain dominating probability measure in \S\ref{SS:3.1}.

\subsection{A dominating probability measure in the canonical space}  \label{SS:3.1}
Note that the assertions of Theorem~\ref{T:1} only depend on the joint law on the path space of the process couple $(X,Y)$. Indeed, the local time $\Lambda$ only depends on $X$ and, moreover, we may write 
\[
2  \int_0^\cdot f'_x(u, X_u) \sqrt{X_u} \ud B^X_u =  \int_0^\cdot f'_x(u, X_u) \ud (X_u - mu)
\]
for the stochastic integral appearing in \eqref{eq:180228.5}--\eqref{eq:180228.8}. Hence, throughout Section~\ref{S:3}, we may and shall work on the space of functions from $[0, \infty)$ to $[0, \infty)^2$, where $(0, 0)$ will be a ``cemetery'' state and functions will be continuous in the open interval before hitting $(0, 0)$. More precisely, given a  right-continuous function $\omega: [0, \infty) \mapsto [0, \infty)^2$, define
\[
\zeta (\omega) \dfn \inf \set{t \geq 0 \such \omega(t) = (0,0)},
\]
and let $\Omega$ denote the set of all right-continuous functions $\omega: [0, \infty) \mapsto [0, \infty)^2$ such that $\omega(0) = (1,0)$, $\omega$ is continuous on $[0, \zeta(\omega))$, and $\omega (t) = (0, 0)$ holds for all $t \in [\zeta(\omega), \infty)$. Writing $\omega(t) = (\omega_x(t), \omega_y(t)) \in [0, \infty)^2$ for all $t \geq 0$, we define a pair of processes $(X, Y)$ via $X(\omega, t) = \omega_x(t)$ and $Y(\omega, t) = \omega_y(t)$ for all $(\omega, t) \in \Omega \times [0, \infty)$. We let $\G_\cdot$ be the right-continuous augmentation of the natural  filtration generated by $(X, Y)$, and note that $\zeta$ is a $\G_\cdot$ stopping time. Furthermore,  $\F_\cdot$ will be the right-continuous augmentation of the smallest filtration which makes $X$ adapted and $\zeta$ a stopping time. With $\G_\infty \dfn \bigvee_{t \geq 0} \G_t$ we let $\P$ denote the probability on $(\Omega, \G_\infty)$ under which $\P[\zeta < \infty] = 0$, and the coordinate process $(X, Y)$ consists of two independent squared \textsc{Bessel} processes, with dimensions $m$ and $n-m$ respectively, and recall that $X_0 = 1$ and $Y_0 = 0$ identically hold by the construction of $\Omega$.

We set $Z$ equal to the $(\filt{F}, \P)$--optional projection of $s (X+Y)$.  We recall that $s(X + Y)$ is a strictly positive $
\filt{G}$--local martingale, with localising sequence $(\tau_{1/n})_{n \in \Natural}$, where
\[
\tau_\kappa \dfn \inf \set{t \geq 0 \such X_t + Y_t \leq \kappa}, \quad \kappa \geq 0,
\]
the latter localising sequence having $\P$--almost sure limit $\tau_0$, which coincides $\P$--almost surely with $\zeta$. Hence we may use \textsc{F\"ollmer}'s construction and obtain a probability measure $\Qu$ on $(\Omega, \G_\infty)$ such that $\Qu[\tau_0 = \zeta] = 1$, and
\begin{equation} \label{eq:foellmer}
\E_\P \bra{V_\tau s(X_\tau + Y_\tau) \indic_{\set{\tau < \zeta}}} = \E_\Qu \bra{V_\tau \indic_{\set{\tau < \zeta}}},
\end{equation}
valid for any $\G_\cdot$ stopping time $\tau$ and nonnegative $\G_\cdot$--optional process $V$; see \cite{F1972} and \cite{Perkowski_Ruf_2014}.\footnote{To see that \textsc{F\"ollmer}'s construction can indeed be used as suggested, equip the space $E \dfn [0, \infty)^2 \setminus \{(0,0)\}$ with the metric 
	\[
	d(x,y) = \|x - y\| + \left|\frac{1}{\|x\|} - \frac{1}{\|y\|}\right|, \qquad (x,y) \in E \times E,
	\]
	where $\| \cdot \|$ denotes Euclidean norm in $\Real^2$.  Then $E$ equipped with the topology stemming from the previous metric is a Polish space, and the point $(0,0) \notin E$ can be identified with a cemetery state.
}
In particular, \eqref{eq:foellmer} above gives $\Qu [\tau_{1/\kappa} < \zeta] = 1$ for all $\kappa > 0$, which together with $\Qu[\tau_0 = \zeta] = 1$ implies that $\zeta$ is $\Qu$--almost surely equal to a $\filt{G}$--predictable stopping time.
As shown in the next result,  the $(\filt{F}, \P)$--local martingale property of $Z$ is related to whether $\zeta$ is $\Qu$--almost surely equal to an $\filt{F}$--predictable stopping time.

\begin{lem}\label{L:180225.2}
The following statements hold:
\begin{enumerate}	
	\item Write $\indic_{\dbraco{0, \zeta}} = L (1-K)$, where $K$ is $\filt{F}$--predictable, nondecreasing, with $K_0 = 0$, and $L$ is an $(\filt{F}, \Qu)$--local martingale. Then the process $Z / (1-K)$ is an $(\filt{F}, \P)$--local martingale.  
	\item If $\zeta$ is $\Qu$--almost surely equal to an $\filt{F}$--predictable  stopping time, then $Z$ is an $(\filt{F}, \P)$--local martingale.  
\end{enumerate}
\end{lem}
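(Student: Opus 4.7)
Both parts hinge on a single master identity: combining the definition of the optional projection with the Föllmer duality \eqref{eq:foellmer}, and using that $\zeta = \infty$ holds $\P$--a.s.\ (so the indicator on the $\P$-side is redundant), one obtains
\[
\E_\P[A\, Z_\tau] \;=\; \E_\P[A\, s(X_\tau + Y_\tau)] \;=\; \E_\Qu[A\, \indic_{\{\tau < \zeta\}}]
\]
for every bounded $\filt{F}$--stopping time $\tau$ and every bounded $\F_\tau$--measurable random variable $A$.

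For part (1), I would substitute the hypothesised decomposition on the right-hand side to rewrite this as $\E_\P[A\, Z_\tau] = \E_\Qu[A\, L_\tau (1-K_\tau)]$, and then replace $A$ by $A/(1-K_\tau)$. Since $K$ is $\filt{F}$--predictable, $K_\tau$ is $\F_\tau$--measurable, so this substitution is legitimate provided the factor $1/(1-K_\tau)$ is bounded. I would localise using the $\filt{F}$--stopping times $\tau_n \dfn \inf\{t : K_t \geq 1 - 1/n\}$, which keep $1-K > 1/n$ on $\dbraco{0, \tau_n}$, combined with a reducing sequence for the $(\filt{F},\Qu)$--local martingale $L$. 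On the resulting joint localising sequence the identity becomes
\[
\E_\P[A\, \widetilde Z_\tau] \;=\; \E_\Qu[A\, L_\tau], \qquad \widetilde Z \dfn Z/(1-K).
\]
Applying this to two bounded $\filt{F}$--stopping times $\sigma \leq \tau$ with $A \in \F_\sigma$ bounded, and invoking the $\Qu$--martingale property of the stopped $L$, yields $\E_\P[A\, \widetilde Z_\tau] = \E_\Qu[A\, L_\tau] = \E_\Qu[A\, L_\sigma] = \E_\P[A\, \widetilde Z_\sigma]$, which is the $(\filt{F},\P)$--local martingale property of $\widetilde Z$.

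Part (2) will follow from part (1) via a canonical choice of decomposition. Let $\xi$ denote the $\filt{F}$--predictable stopping time with $\xi = \zeta$ $\Qu$--a.s., and take $L \equiv 1$ and $K \dfn \indic_{\dbracc{\xi,\infty}}$. Then $K$ is $\filt{F}$--predictable (because $\xi$ is), nondecreasing with $K_0 = 0$, $L$ is trivially an $(\filt{F},\Qu)$--martingale, and $L(1-K) = \indic_{\dbraco{0,\xi}} = \indic_{\dbraco{0,\zeta}}$ $\Qu$--a.s., so part (1) yields that $Z/(1-K)$ is an $(\filt{F},\P)$--local martingale. To upgrade this to $Z$ itself I would show that $\xi = \infty$ $\P$--a.s., which forces $K \equiv 0$ and hence $Z/(1-K) = Z$ on a set of $\P$--full measure. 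Applying the master identity with $\tau = t$ and $A = \indic_{\{\xi \leq t\}}$ (which is $\F_t$--measurable),
\[
\E_\P\bigl[\indic_{\{\xi \leq t\}}\, s(X_t + Y_t)\bigr] \;=\; \E_\Qu\bigl[\indic_{\{\xi \leq t\}}\, \indic_{\{t < \xi\}}\bigr] \;=\; 0,
\]
and since $s(X_t + Y_t) > 0$ holds $\P$--a.s., $\P[\xi \leq t] = 0$ for every $t \geq 0$.

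\textbf{Main obstacle.} The principal difficulty I anticipate is the predictable localisation in part (1): $K$ may have jumps, so the naive hitting times of the thresholds $1 - 1/n$ need not leave $K_\tau$ strictly below $1$; properly combining an announcing sequence of the predictable stopping time $\inf\{t : K_t \geq 1 - 1/n\}$ with a reducing sequence for $L$, and verifying that the substitution $A \mapsto A/(1-K_\tau)$ preserves integrability on the combined sequence, is where the bookkeeping becomes delicate.
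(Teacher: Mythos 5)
Your proof is correct, but it takes a genuinely different route from the paper's.

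The paper's proof of part~(1) runs through Bayes' formula for conditional expectations under a change of measure: using $N = 1/s(X+Y)$, it derives the clean pointwise factorisation
\[
Z_t = \frac{\indic_{\{\zeta>t\}}}{\E_\Qu[N_t\mid\F_t]} = (1-K_t)\,\frac{L_t}{\E_\Qu[N_t\mid\F_t]},
\]
and then simply quotes the Girsanov-type fact that dividing an $(\filt{F},\Qu)$--local martingale $L$ by the $\filt{F}$-density process $D_t = \E_\Qu[N_t\indic_{\{t<\zeta\}}\mid\F_t]$ of $\P$ with respect to $\Qu$ produces an $(\filt{F},\P)$--local martingale. Your approach replaces this by the ``master identity'' $\E_\P[A\,Z_\tau] = \E_\Qu[A\,\indic_{\{\tau<\zeta\}}]$ and an explicit two-stopping-time verification of the local martingale property after a substitution $A \mapsto A/(1-K_\tau)$. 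This is more elementary in that it avoids explicit appeal to Bayes' formula and the Girsanov transfer, but as you note it requires the localisation to be handled with care: since $K$ is predictable but possibly discontinuous, the hitting times of the levels $1-1/n$ must be announced to keep $1/(1-K_{\tau})$ bounded, and one must check that the combined sequence tends to infinity $\P$-a.s.\ (which it does, because $(1-K_t)L_t = 1$ holds $\P$-a.s., being the transfer of a $\Qu$-a.s.\ identity through $\P \ll \Qu$ on each $\F_t$). The paper's route sidesteps all of this by exhibiting the density process explicitly. For part~(2), the paper asserts that predictability of $\zeta$ forces $L=1$ and $K=\indic_{\dbracc{\zeta,\infty}}$ in the decomposition and then invokes $\P[\zeta<\infty]=0$; your version is arguably cleaner, since you do not rely on uniqueness of the multiplicative decomposition but instead exhibit a valid $(L,K)$, apply part~(1), and give a self-contained argument that $\xi=\infty$ $\P$-a.s.\ (using the strict positivity of $s(X+Y)$, which is available here since this section assumes $n>2$).
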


\begin{proof}
Write $\indic_{\dbraco{0, \zeta}} = L (1-K)$ as in the statement, and set $N = 1 / s(X+Y)$. \textsc{Bayes}' formula (see, e.g., Theorem~5.1 in \cite{Ruf_hedging}) yields
\begin{align*}
Z_t = \frac{ \E_\Qu [N_t (1 / N_t) \indic_{\set{\zeta > t}} \such \F_t] }{\E_\Qu[N_t \indic_{\set{\zeta > t}} | \cF_t]} = \frac{\indic_{\set{\zeta > t}}}{\E_\Qu[N_t | \cF_t]} = (1-K_t) \frac{L_t}{\E_\Qu[N_t |  \cF_t]}, \qquad t \geq 0,
\end{align*}
where the denominator denotes optional projection.  Since $L$ is an $(\filt{F}, \Qu)$--local martingale, the last fraction in the displayed formula above is an $(\filt{F}, \P)$--local martingale; hence so is $Z/(1-K)$. 
	
To see the validity of statement (2), note that if $\zeta$ is $\Qu$--almost surely equal to an $\filt{F}$--predictable stopping time, then $K = 0$ and $L = 1$ hold on $\dbraco{0, \zeta}$; since $\P [\zeta < \infty] = 0$, it follows that $Z$ is an $(\filt{F}, \P)$--local martingale.
\end{proof}

The previous lemma also yields that $1-K$ is the finite-variation component in the multiplicative  $(\filt{F}, \P)$--\textsc{Doob-Meyer}  decomposition of the $(\filt{F}, \P)$--supermartingale $Z$.  For future reference, and with $\mathcal L(1-K) \dfn - \int_0^\cdot 1/(1-K_u) \ud K_u$ denoting the stochastic logarithm of $1-K$, note that \textsc{It\^o}'s product rule yields that $\mathcal L(1-K)$ is the finite-variation component in the additive  $(\filt{F}, \Qu)$--\textsc{Doob-Meyer}  decomposition of the $(\filt{F}, \Qu)$--supermartingale $\indic_{\dbraco{0, \zeta}}$. Similarly, and using the fact that $\P[\zeta = \infty] = 1$, we have $\int_0^\cdot Z_{u-} \ud \mathcal L(1-K)_u$ 
is the finite-variation component in the additive $(\filt{F}, \P)$--\textsc{Doob-Meyer}  decomposition of $Z$.

\subsection{Putting everything together---a more probabilistic proof of Theorem~\ref{T:1}}\label{SS:3.2}
Showing the cases $m = 0$, $Z = f(\cdot, X)$, and \eqref{eq:180228.5} for $m \geq 2$ is done exactly as in Section~\ref{S:2}. However, here is an alternative argument for the local martingale property of $Z$ if $m \geq 2$.
Define $\rho$ as in \eqref{eq:180225.7}, 
so that, $\Qu$--almost surely, $\rho \leq \tau_0 = \zeta$. Next, note that
\begin{align*}
\Qu\left[\rho < \zeta\right] = \Qu\left[\rho < \tau_0\right] &= \Qu\left[X_{\rho} = 0, Y_{\rho} > 0, \, \rho < \infty\right] \\
&= \Qu\left[X_{\rho} = 0, \, N_{\rho} > 0, \, \rho < \infty\right] \leq   \E_{ \P} \left[Z_{\rho}  \indic_{\set{X_{\rho} = 0} \cap \{\rho < \infty   \}}   \right] = 0,
\end{align*}
the latter following from the fact that, under $ \P$, $ X$ is an $m$--dimensional squared \textsc{Bessel} process, which never hits zero. Thus, $\zeta$ is $ \Qu$--almost surely equal to an ${\filt{F}}$--predictable stopping time, yielding that $Z$ is an $(\filt{F}, \P)$--local martingale by Lemma~\ref{L:180225.2}(2).

Let us now consider the case $m \in (0,2)$. We start by computing the additive $(\filt{F}, \Qu)$--predictable compensator of  $ \indic_{\dbraco{\zeta , \infty}}$. To this end, note that the inverse local time $A$ of $X$ at zero, given in \eqref{eq:180225.6}, is right-continuous. Hence we may define the time-changed filtration $\widehat{\filt{F}} = (\widehat{\cF}_s)_{s \geq 0} = (\cF_{A_s})_{s \geq 0}$. With all relationships that follow valid under $\Qu$, $\lambda_0 \dfn \loct_{\tau_0} = \loct_{\zeta}$ is an $\widehat{\filt{F}}$ stopping time and  
\[
\zeta = \tau_0 =  A_{\lambda_0} = A_{\loct_{\tau_0}}
\]
holds. Define also the processes $\widehat X = X_{A_\cdot}$, $\widehat Y = Y_{A_\cdot}$ and $\widehat N = N_{A_\cdot} = 1/s(\widehat X + \widehat Y)$. Since $\widehat{X} = 0$ we have $\widehat N = \widehat Y^{n/2 - 1}$.  Noting that $\tau_0$ may only happen at times that $\loct$ charges, we conclude that if $\widehat F$ is the $( \widehat{\filt{F}}, \Qu)$--compensator of $\indic_{\dbraco{\lambda_0, \infty}}$, then $F = \widehat F_{\loct}$ is the  $({\filt{F}}, \Qu)$--compensator of $\indic_{\dbraco{\zeta, \infty}}$.

Fix now $u \geq 0$ and $h > 0$. In preparation for the calculations below, note that, since $X$ is independent of $Y$ under $\P$, the scaling property of the squared \textsc{Bessel} processes $Y$ starting from zero (e.g., this can be seen from the density in \eqref{eq:180225.4}) will give
\[
\E_{ \P} \left[\widehat{Z}_u \left| \widehat{\cF}_u\right.\right] = \E_{ \P} \left[Y^{1-n/2}_{A_u} \left| \widehat{\cF}_u\right.\right] = \E_{ \P}  \bra{Y^{1-n/2}_1} A_u^{1-n/2}.
\]
From this observation it follows that
\[
\E_{ \P} \left[\widehat{Z}_{u+h} \left| \widehat{\cF}_u\right.\right] = \E_{ \P} \left[ \E_{ \P} \left[\widehat{Z}_{u+h} \left| \widehat{\cF}_{u+h}\right.\right] \left| \widehat{\cF}_u\right.\right] = \E_{ \P}  \bra{Y^{1-n/2}_1} \E_{ \P} \left[A_{u+h}^{1-n/2} \left| \widehat{\cF}_u\right.\right],
\]
which in turn gives

\begin{align} \label{eq:190514}
\frac{\E_{ \P} \left[\widehat{Z}_{u+h} \left| \widehat{\cF}_u\right.\right]}
{\E_{ \P} \left[\widehat{Z}_{u} \left| \widehat{\cF}_u\right.\right]} = \E_{ \P} \left[ \frac{A_{u + h }^{1-n/2}}{A_{u}^{1-n/2}} \left| \widehat{\cF}_u\right.\right] = \left.\E_{ \P} \left[\left(\frac{x}{x+A_h - {\rho} }\right)^{n/2-1}\right] \right|_{x = A_u},
\end{align}
the last equality following from the fact that $A$ is an additive functional.
As a consequence of the previous calculations, on $\{\lambda_0 > u\} = \{\loct_{\tau_0} > u \}$ we have
\begin{align}
\Qu\left[ \lambda_0 > u + h \left| \widehat{\cF}_u\right.\right] &= \Qu\left[\widehat{Y}_v > 0 \text{ for all  } v \in (u, u + h]  \left| \widehat{\cF}_u\right.\right]  =  \Qu \left[\widehat{Y}_{u+h} > 0 \left| \widehat{\cF}_u\right.\right] \nonumber \\
&= \frac{\E_{ \P} \left[\widehat{Z}_{u+h} \left| \widehat{\cF}_u\right.\right]}
{\E_{ \P} \left[\widehat{Z}_{u} \left| \widehat{\cF}_u\right.\right]} = \left.\E_{ \P} \left[\left(\frac{x}{x+A_h - {\rho} }\right)^{n/2-1}\right] \right|_{x = A_u}.  \label{eq:180430.1}
\end{align}
Here the second equality follows from $\Qu[\tau_0 = \zeta] = 1$, which implies that zero is an absorbing state for $\widehat Y$ under $\Qu$. The second equality comes from \textsc{Bayes}' formula and the third equality is \eqref{eq:190514}.

In order to evaluate the last quantity, we shall use the following identity for the Gamma function:
\begin{align*}
y^{n/2-1} =  \frac{1}{\Gamma\left( n / 2 -1\right)} \int_0^\infty z^{n/2-2} \e^{-z/y} \ud z, \qquad y > 0.
\end{align*}
The \textsc{Laplace} transform in \eqref{eq:180223.1} then yields, for $x > 0$ and $h \geq 0$, that
\begin{align}
\E_\P\left[\left(\frac{x}{x+A_h - \rho}\right)^{n/2-1}\right] 
&=  \frac{1}{\Gamma\left(n/2-1\right) } \int_0^\infty  z^{n/2-2}   \e^{-z} \E_\P\left[\exp\left(-\frac{z}{x} (A_h - \rho)\right )\right]\ud z  \nonumber\\
&=   \frac{1}{\Gamma\left(n/2-1\right) }   \int_0^\infty       z^{n/2-2} \exp\left(-z -\frac{\Gamma\left( m / 2 \right)}{\Gamma\left(1 - m/2  \right )} \left(\frac{z}{2x}\right)^{1 - m/2 } h  \right) \ud z \nonumber\\
&= \int_0^\infty g(h, z; x) \ud z,
\label{eq:180430.2}
\end{align}
where we have used, for fixed $x > 0$, the function
\begin{align} \label{eq:180502.2}
[0, \infty)^2 \ni (h,z) \mapsto g (h, z; x) \dfn   \frac{1}{\Gamma\left( n / 2 -1\right)}       z^{n/2-2} \exp\left(-z -\frac{\Gamma\left(m / 2 \right)}{\Gamma\left(1 - m/2   \right )}   \left(\frac{z}{2x}\right)^{1 - m/2 }  h \right).
\end{align}
Note that, for all $h > 0$ and $z > 0$, 
\begin{align} \label{eq:180502.3}
0 \geq \frac{\partial} {\partial h} g (h, z; x) \geq  \left.\frac{\partial }{\partial h} g (h, z; x) \right|_{h  = 0+} = - \frac{\Gamma\left( m / 2 \right)}{\Gamma\left(n/2 -1\right) \Gamma\left(1 - m / 2    \right ) }       z^{n/2-2}  \left(\frac{z}{2 x}\right)^{1 - m/2 }\e^{-z}.
\end{align}
Since the right side of the last inequality is integrable, a combination of \eqref{eq:180430.1}, \eqref{eq:180430.2}, \textsc{L'H\^opital}'s rule, and the dominated convergence theorem give, for all $u \geq 0$, on the event  $\{\lambda_0 > u\}$,
\begin{align*}
\lim_{h \downarrow 0}  \frac{1}{h} \Qu\left[ \lambda_0 \leq u + h \left| \widehat{\cF}_u\right.\right] &= \lim_{h \downarrow 0} \frac{1}{h}    \left(1 - \int_0^\infty g(h, z; A_u) \ud z\right)
=  - \int_0^\infty  \left.\frac{\partial }{\partial h} g (h, z; A_u) \right|_{h  = 0+}  \ud z
\\
&=  \frac{\Gamma\left( m / 2 \right)}{\Gamma\left(n/2 -1\right) \Gamma\left(1 - m / 2    \right ) } \left(\frac{1}{2 A_u}\right)^{1 - m/2 }  \int_0^\infty     z^{(n-m)/2-1} \e^{-z} \ud z \notag\\
&=  \beta \left(\frac{1}{2 A_u}\right)^{1 - m/2 },
\end{align*}
where
\begin{align*}
\beta \dfn  \frac{\Gamma\left( m / 2 \right)  \Gamma\left(\pare{n-m}/2\right)}{\Gamma\left( n / 2 -1\right) \Gamma\left(1 -  m / 2    \right ) }. 
\end{align*}
It is now intuitively clear, and we in fact provide a precise argument at the end of the proof, that the additive $({\filt{F}}, \Qu)$--compensator  $\widehat F$ of $\indic_{\dbraco{\lambda_0, \infty}}$ has the form
\begin{align*}
\widehat F = \beta \int_0^{\cdot \wedge \lambda_0} \left(\frac{1}{2 A_s}\right)^{1-m/2} \ud s.
\end{align*}
Hence, in the notation of Lemma~\ref{L:180225.2}, and also in view of the discussion following it, the additive $({\filt{F}}, \Qu)$--compensator  $F = - \mathcal L(1-K)$ of $\indic_{\dbraco{\tau_0, \infty}} = \indic_{\dbraco{\zeta, \infty}}$ has the form
\begin{align*}
F = \beta \int_0^{\cdot \wedge \tau_0} \left(\frac{1}{2 u}\right)^{1-m/2} \ud \Lambda_u.
\end{align*}

Since
\[
\Gamma\left(\frac{n-m}{2}\right) f(u, 0) =  \int_0^\infty \frac{1}{ (2uw)^{n/2 - 1}  } w^{(n-m)/2 - 1} \e^{-w} \ud w = \frac{1}{ (2u)^{n/2 - 1}  } \Gamma \left(1 - \frac{m}{2}\right),
\]
the process
\[
- \int_0^\cdot Z_{u-} \ud F_u = -  \int_0^\cdot f(u, 0) \ud F_u  = - \frac{\Gamma\left( m / 2 \right)  }{\Gamma\left(n / 2 - 1\right) } \int_0^\cdot \frac{1}{(2u)^{(n-m)/2}} \ud \Lambda_u
\]
is the finite-variation component in the additive $(\filt{F}, \P)$--\textsc{Doob-Meyer}  decomposition of $Z$.  The fact that the martingale part will have the form as in \eqref{eq:180228.4} can be shown analytically, as in Section~\ref{S:2}.

It remains to argue that, for fixed $u \geq 0$ and $\Delta > 0$, on the event $\{ \lambda_0 > u\}$, we have
\begin{align} \label{eq:180502.1}
\Qu \left[\left. \lambda_0 \leq u +\Delta\right| \widehat{\F}_u \right] = 
\E_\Qu\left[ \left.  \beta \int_u^{u + \Delta} \left(\frac{1}{2 A_s}\right)^{1-m/2} \indic_{\set{\lambda_0 > s} } \ud s
\right| \widehat{\F}_u \right] .
\end{align}
To see this, note that, for each $N \in \mathbb{N}$, we have
\begin{align*}
\Qu \left[\left. \lambda_0 \leq u +\Delta\right| \widehat{\F}_u \right] 
&= \E_\Qu \left[ \left.  \sum_{n = 1}^{2^N}  \indic_{\set{ u + \pare{n-1} 2^{-N} \Delta < \lambda_0 \leq u + n 2^{-N} \Delta}} \right| \widehat{\F}_u \right] \\
&= \E_\Qu \left[ \left.  \sum_{n = 1}^{2^N}  \indic_{ \{ \lambda_0 > u + \pare{n-1} 2^{-N}  \Delta \}} \Qu\left[\left.  
\lambda_0 \leq u +\frac{n}{2^N} \Delta
\right| \widehat{\F}_{u + (n-1) 2^{-N} \Delta} \right] \right| \widehat{\F}_u  \right]\\
&= \E_\Qu \left[ \left. \int_u^{u + \Delta} \eta_u^N \ud u \right|\widehat{\F}_u \right],
\end{align*}	
for the piecewise constant process
\begin{align*}
\eta^N &\dfn   \sum_{n = 1}^{2^N}  \indic_{ \{ \lambda_0 > u + \pare{n-1} 2^{-N} \Delta \}} \left(2^N
\Qu\left[\left.  
\lambda_0 \leq u+\frac{n}{2^N} \Delta
\right|  \widehat{\F}_{u + \pare{n-1} 2^{-N} \Delta} \right]\right)
\indic_{\dbraco{u + \pare{n-1} 2^{-N} \Delta, u+ n 2 ^{-N}\Delta}} \\
&= \sum_{n = 1}^{2^N}  \indic_{ \{ \lambda_0 > u + \pare{n-1} 2^{-N} \Delta \}} 
\left(- \frac{\partial} {\partial h} \int_0^\infty g \left(h^N_n, z;  A_{u + (n-1) 2^{-N} \Delta} \right) \ud z \right) \indic_{\dbraco{u + \pare{n-1} 2^{-N} \Delta, u+ n 2 ^{-N}\Delta}},
\end{align*}
by \eqref{eq:180430.1}, \eqref{eq:180430.2}, and the mean value theorem, where $h_n^N$ is a $[0, 2^{-N} \Delta]$--valued  $\widehat{\F}_{u + (n-1) 2^{-N} \Delta}$--measurable random variable and the function $g$ is defined in \eqref{eq:180502.2}. Thanks to \eqref{eq:180502.3}, on the event $\{A_u > 1/\kappa\}$ for any $\kappa > 0$, the sequence  $(\eta^N)_{N \in \mathbb{N}}$ is uniformly bounded. It also satisfies
\begin{align*}
\lim_{N \uparrow \infty} \eta^N =  \indic_{\dbraco{u, \lambda_0 \wedge (u + \Delta)} }\, \beta \left(\frac{1}{2 A}\right)^{1 - m/2 },
\end{align*}
\textsc{Lebesgue}$\otimes \Qu$--almost everywhere. Hence, an application of the dominated convergence theorem yields the claim in \eqref{eq:180502.1}. This concludes the proof.
\qed

\bibliographystyle{amsalpha}
\bibliography{aa_bib}
\end{document}